\newtheorem{thm}{Theorem}[section]
\newtheorem{lem}[thm]{Lemma}
\theoremstyle{definition}
\newtheorem{defn}[thm]{Definition}
\newtheorem{conj}{Conjecture}
\newtheorem{ex}[thm]{Examples}
\newtheorem{example}[thm]{Example}
\theoremstyle{remark}
\newtheorem{rem}[thm]{Remark}
\numberwithin{equation}{section}
\newcommand{\rin}{\begin{sideways}$\in$\end{sideways}}
\newcommand{\Z}{\mathbb Z}
\newcommand{\C}{\mathbb C}
\newcommand{\R}{\mathbb R}
\newcommand{\N}{\mathbb N}
\newcommand{\NN}{\N_+}
\newcommand{\Lo}{\mathcal{L}}
\newcommand{\Homt}{\Ho^{\MT_+}}
\font \rus= wncyr10
\newcommand{\sha}{\, \hbox{\rus x} \,}
\newcommand{\Ho}{\mathcal{H}}
\newcommand{\Amt}{\mathcal{A}^{\MT}}
\newcommand{\zetam}{\zeta^{ \mathfrak{m}}}
\newcommand{\zetaa}{\zeta^{ \mathfrak{a}}}
\newcommand{\zetal}{\zeta^{ \mathfrak{L}}}
\newcommand{\Imot}{I^{\mathfrak{m}}}
\newcommand{\Imota}{I^{\mathfrak{a}}}
\newcommand{\Imotl}{I^{\mathfrak{L}}}
\newcommand{\Q}{\mathbb Q}
\newcommand{\To}{\longrightarrow}
\newcommand{\G}{\mathbb{G}}
\newcommand{\Ao}{\mathcal{A}}
\newcommand{\tdots}{.\, .\,}
\newcommand{\MT}{\mathcal{MT}}
\newcommand{\U}{\mathcal{U}}
\newcommand{\Gu}{\mathcal{G_U}}
\def\0skip{\vskip 2pt}
\def\1skip{\vskip 4pt}
\def\2skip{\vskip 10pt}
\def\3skip{\vskip 0.15in}
\def\4skip{\vskip 0.20in}
\def\5skip{\vskip 0.25in}
\def\6skip{\vskip 0.30in}
\author[F. Brown]{Francis C. S. Brown}
\address{}
\email{brown@math.jussieu.fr}
\subjclass[2000]{Primary 11M32; Secondary 16T05, 13B05}
\keywords{Mixed Tate motives, multiple zeta values}
\begin{document}
\begin{title}[On the decomposition of motivic multiple zeta values]{On the decomposition of motivic multiple zeta values}\end{title}
\maketitle
\begin{abstract}  We review motivic aspects of multiple zeta values, and  as an application,  we give an exact-numerical algorithm to decompose any (motivic) multiple zeta value of given weight into a chosen  basis up to that weight.
\end{abstract}
\section{Introduction}

The  aim of these notes is  to present motivic aspects of multiple zeta values in  concrete  terms, and give applications which might be of  use to  physicists.
Most introductory texts on multiple zeta values focus exclusively on the relations they   satisfy. Here,  we take the opposite point of view, and put the emphasis on 
the coalgebra structure underlying the  motivic multiple zeta values. There are two applications:
\begin{enumerate}
\item we show how to use the coalgebra structure to  decompose any multiple zeta value numerically into a conjectural basis.
\item we show how to lift certain identities between multiple zeta values, i.e., real numbers, to their motivic versions.
\end{enumerate}
The first point  requires  explanation. Since the $\Q$-vector space of multiple zeta values is finite-dimensional in each weight,  standard lattice reduction algorithms give a numerical  way to write an arbitrary multiple zeta value of given weight in terms of some  chosen spanning set. The point of  $(1)$ is that the coalgebra structure enables one to replace this single  high-dimensional lattice reduction problem with a sequence of  one-dimensional lattice reductions.  This is simply the problem of  identifying a   rational number  $\alpha\in \Q$  which is  presented as an element $\alpha \in \R$   to arbitrarily high accuracy, and  
can be  done using continued fractions. In fact, we expect   that 
there exists a relatively small a priori bound on the denominators of the  rational numbers $\alpha$ which can arise, and  so this algorithm   should be workable in  practice.

 An application of $(2)$  might be  to prove that certain families of relations between multiple zeta values are  `motivic'. 
 The idea behind this    was  used for the main theorem of \cite{Br23}, 
 where one had to lift a certain relation between actual multiple zeta values  to their motivic versions.

The paper is set out as follows. In \S2, we review some basic properties of iterated integrals for motivation. In $\S3$  we briefly review the structure of  the category of mixed Tate motives over $\Z$ and state the main properties of motivic multiple zeta values.  In $\S4$ we show how to define certain derivation operators $\partial^{\phi}_{2k+1}$, where $k\geq 1$, which act on the space of motivic multiple zeta values.  In \S5  we  
describe the decomposition algorithm $(1)$ using these operators,  and in \S6 we provide a worked example of this algorithm. 
 The reader who is only interested in implementing the algorithm may turn immediately to $\S\S5.1-5.2$, 
 which can be read independently from the rest of the paper.
  
\section{Iterated Integrals}  We begin with some generalities on iterated integrals, before specializing to the case of iterated integrals on the punctured projective line.

\subsection{General iterated integrals.}\label{sectGenitin} Let $M$ be a smooth $C^{\infty}$  manifold over $\R$, and let $k$ be the real or complex numbers.  Let $\gamma: [0,1] \rightarrow M$ be a piecewise smooth path on $M$, 
and  let $\omega_1,\ldots, \omega_n$ be 
smooth $k$-valued 1-forms on $M$. Let us  write
$$\gamma^*(\omega_i) = f_i(t) dt\ ,$$
for the pull-back of the forms $\omega_i$ to the interval $[0,1$].

\begin{defn} Let the iterated integral of $\omega_1,\ldots,\omega_n$ along $\gamma$ be
\begin{equation} \label{defitint} \int_{\gamma} \omega_1\ldots \omega_n = \int_{0\leq t_1\leq \ldots \leq t_n\leq 1} f_1(t_1) dt_1 \ldots f_n(t_n) dt_n\ . 
\end{equation}
More generally, an iterated integral is any $k$-linear combination of such integrals. The empty  integral ($n=0$) is defined to be the constant  $1$.
\end{defn}
The   iterated integrals $\int_{\gamma} \omega_1\ldots \omega_n$ do not depend on the choice of parametrization of the path $\gamma$, and  satisfy the following   basic properties:
 \1skip
 
\emph{Shuffle product formula}. Given $1$-forms $\omega_1,\ldots, \omega_{r+s}$ one has:
$$\int_{\gamma} \omega_1\ldots\omega_r \int_{\gamma} \omega_{r+1}\ldots \omega_{r+s} =\sum_{\sigma \in \Sigma(r,s)}  \int_{\gamma} \omega_{\sigma(1)} \ldots \omega_{\sigma(n)}\ ,$$
where $n=r+s$, and  $\Sigma(r,s)$ is the set $(r,s)$-shuffles:
$$\Sigma(r,s) = \{\sigma\in \Sigma(n): \sigma(1)<\ldots<\sigma(r) \hbox{ and } \sigma(r+1)<\ldots<\sigma(r+s)\}\ .$$
As a general rule, for any letters $a_1,\ldots, a_{r+s}$,  we shall formally write
\begin{equation}\label{shuffdef} a_1\ldots a_r \sha a_{r+1}\ldots a_{r+s} = \sum_{\sigma \in \Sigma(r,s)} a_{\sigma(1)} \ldots a_{\sigma(r+s)} \ ,
\end{equation}
viewed  in  $\Z\langle a_1,\ldots, a_{r+s}\rangle$, the free $\Z$-module spanned by  words in the $a$'s.
\1skip

\emph{Composition of paths}. If $\alpha,\beta:I\rightarrow M$ are two  piecewise smooth paths such that $\beta(0)=\alpha(1)$, then let $\alpha\beta$ denote the composed path obtained by traversing first $\alpha$ and then $\beta$.  
Then  
$$\int_{\alpha \beta} \omega_1\ldots\omega_n =\sum_{i=0}^n  \int_{\alpha} \omega_1\ldots \omega_i \int_{\beta} \omega_{i+1}\ldots \omega_{n}\ ,$$
 where  recall that the empty iterated integral ($n=0$) is just the constant  $1$.
\1skip

\emph{Reversal of paths}.   If $\gamma^{-1}(t)=\gamma(1-t)$ denotes the reversal of the path $\gamma$, then  we have the following reflection formula:
$$\int_{\gamma^{-1}} \omega_1\ldots\omega_n = (-1)^n \int_{\gamma} \omega_n\ldots \omega_1\ .$$
\1skip

\emph{Functoriality}.   If $f:M'\rightarrow M$  is a smooth map, and $\gamma:[0,1]\rightarrow M'$ a piecewise smooth path, then we have: 
$$\int_{\gamma} f^*\omega_1\ldots f^*\omega_n =  \int_{f(\gamma)} \omega_1\ldots \omega_n \ .$$

\subsection{The punctured projective line.} Now let us consider the case where   $k=\C$,   $S$ is a finite set of  points in $\C$, and 
$M=\C\backslash S$. Consider the set of closed one forms
\begin{equation}\label{oneforms} {dz\over z-a_i} \in \Omega^1(M)\ \hbox{ where } a_i \in S \ .
\end{equation}
Let $a_0,a_{n+1} \in M$ and let $\gamma$ be a path with endpoints  $\gamma(0)=a_0, \gamma(1)=a_{n+1}$.  Using the notation from \cite{GG}, set:\
\begin{equation}\label{Igamma}
I_{\gamma}(a_0;a_1,\ldots, a_n;a_{n+1}) = \int_{\gamma} {dz \over z-a_1} \ldots {dz \over z-a_n}\ . 
\end{equation} 
Since the exterior product of any two  forms $(\ref{oneforms})$ is zero and each one is  closed, one can  show that the iterated integrals $(\ref{Igamma})$ only depend on the homotopy class of $\gamma$ relative to its endpoints.
When the path $\gamma$ is clear from the context, it can be dropped from the notation.

A variant is to take the limit points $a_0,a_{n+1}$ in the set  $S$, in which case only the interior of $\gamma([0,1])$ lies in $M$. When  the integral $(\ref{Igamma})$ converges,
we can extend the definition to this case 
and show that the basic properties of \S\ref{sectGenitin} still hold. Even when it does not converge, $(\ref{Igamma})$ can   be defined by a suitable logarithmic regularization procedure (tangential basepoint).

\subsection{Multiple zeta values.}  From now on, we shall only consider the case where $M = \C\backslash \{0,1\}$, and thus  all  $a_i\in \{0,1\}$. 
 There  is a canonical path  $\gamma: (0,1)\rightarrow M$ where $\gamma(t) =t$, but note that the endpoints of $\gamma$ no longer lie in $M$.
Write
\begin{eqnarray} \label{rhodef} \rho: \NN^r  &\To& \{0,1\}^\times \\
\rho(n_1,\ldots, n_r) &=& 10^{n_1-1} \ldots 10^{n_r-1} \nonumber 
\end{eqnarray}
where   $0^k$ denotes a sequence of $k$ zeros, and $\NN=\N\backslash \{0\}$.
When $n_r\geq 2$, the following iterated integral and sum converge absolutely, and we have
\begin{eqnarray}  \label{Iconvdef}
  I_{\gamma}(0;\rho(n_1,\ldots, n_r) ;1)   & = &  (-1)^r \sum_{0<k_1<\ldots< k_r} { 1\over k_1^{n_1} \ldots k_r^{n_r}}  \\
& = & (-1)^r \zeta(n_1,\ldots, n_r) \ .   \nonumber\end{eqnarray}
This is easily  verified from a  geometric expansion of ${dt\over t-1}$.
  In this case, the  word $\rho(n_1,\ldots, n_r) \in \{0,1\}^\times$ begins in $1$
and ends in $0$,   and is called a  convergent word in $0,1$ for obvious reasons.

In general, for any sequence $(n_1,\ldots, n_r) \in \NN^r$, the  quantity $\sum_i n_i$ is called  the weight, and $r$ the depth.

\subsection{Regularization of MZVs}
 One can extend the definition of $I_{\gamma}(0;a_1,\ldots, a_n;1)$ with $a_i\in\{0,1\}$  from the set of convergent words 
to the general case by using  the shuffle product formula. We henceforth drop the $\gamma$ from the subscript.

\begin{lem} There is a unique way to define a set of real numbers $I(a_0;a_1,\ldots,a_n;a_{n+1})$ for any $a_i\in \{0,1\}$, such that

 \begin{itemize} \label{lregdef}

\item $I(0;a_1,\ldots, a_n;1)$ is given by  $(\ref{Iconvdef})$ if $a_1=1$ and $a_n=0$.
\item $I(a_0;a_1;a_2)=0$ and $I(a_0;a_1)=1$ for  all $a_0,a_1,a_2 \in \{0,1\}$. 
\item  (Shuffle product). For all $n=r+s$ and $a_0,\ldots, a_{n+1} \in \{0,1\}$
$$ I (a_0;a_1,\ldots, a_r;a_{n+1}) I(a_0;a_{r+1},\ldots, a_{r+s};a_{n+1})  \quad $$ 
$$ \qquad = \sum_{\sigma \in \Sigma(r,s)} I(a_0;a_{\sigma(1)},\ldots, a_{\sigma(r+s)};a_{n+1})\ .$$
\item $I(a_0;a_1,\ldots, a_n;a_{n+1}) =0   \hbox{ if } a_0=a_{n+1}  \hbox{ and } n\geq1$.
\item $ I(a_0;a_1,\ldots, a_n;a_{n+1}) = (-1)^n I(a_{n+1};a_n,\ldots, a_1;a_0)$.
\item $ I(a_0;a_1,\ldots, a_n;a_{n+1}) =  I(1-a_{n+1};1-a_n,\ldots, 1-a_1;1-a_0)$.
\end{itemize}
\end{lem}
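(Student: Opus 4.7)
The plan is to separate uniqueness, which requires only the shuffle product and the prescribed boundary values $I(0;0;1)=I(0;1;1)=0$, from existence, which requires constructing an actual shuffle-algebra morphism. By properties 4 and 5, every value $I(a_0;a_1,\ldots,a_n;a_{n+1})$ reduces to one with endpoints $(0,1)$: the cases $a_0=a_{n+1}$ are forced to zero, and the $(1,0)$ case is obtained by path reversal from the $(0,1)$ case. So it suffices to pin down $I(0;a_1,\ldots,a_n;1)$ for every word $w=a_1\ldots a_n$ in $\{0,1\}^{\times}$.

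For uniqueness I induct on $p+q$, where $p$ is the number of leading $0$'s of $w$ and $q$ the number of trailing $1$'s, so that $w=0^p w' 1^q$ with $w'$ either empty or beginning in $1$ and ending in $0$. The base cases ($p+q=0$ or $n\leq 1$) are covered by the explicit prescriptions: either $w$ is convergent and the value is given by $(\ref{Iconvdef})$, or $n=0$ and the value is $1$, or $n=1$ and the value is $0$. For the inductive step with $p\geq 1$ (the case $q\geq 1$ is symmetric, via $I(0;1;1)=0$), the shuffle relation
$$0 \;=\; I(0;0;1)\cdot I(0;\,0^{p-1}w'1^q;\,1)$$
expands on the right into $p$ copies of $I(0;w;1)$, arising from the insertions of the extra $0$ anywhere in the leading zero-block, plus a remainder consisting of $I$-values whose associated parameters $(p',q')$ all satisfy $p'+q'<p+q$ (inserting the $0$ into or past $w'$ consumes at least one of the $p$ leading zeros and does not add new trailing ones from nothing; inserting it into the $1^q$ block strictly decreases $q$). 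Solving for $I(0;w;1)$ completes the induction.

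For existence, the standard tangential-base-point regularization of the iterated integral along the real segment from $0$ to $1$ (with tangent vectors $+1$ at $0$ and $-1$ at $1$) produces real numbers $I(0;a_1,\ldots,a_n;1)$ extending $(\ref{Iconvdef})$, satisfying the shuffle product, and with the normalization $I(0;0;1)=I(0;1;1)=0$. The remaining endpoint pairs are then defined via property 4 (for $(0,0)$ and $(1,1)$) and property 5 (for $(1,0)$); property 6 follows from the involution $z\mapsto 1-z$, which sends $dz/z\leftrightarrow -dz/(z-1)$ and reverses $[0,1]$, the two sign contributions cancelling. The main obstacle is really this existence claim: verifying that the tangential-base-point construction yields a well-defined shuffle-algebra morphism $\Q\langle e_0,e_1\rangle\to\R$ with the prescribed length-one vanishing is classical (Deligne, Drinfeld) but nontrivial. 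Once it is granted, the lemma reduces to the combinatorial bookkeeping in the uniqueness argument.
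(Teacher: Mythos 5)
The paper states this lemma without proof; the only comment it makes is that the fifth bullet is path reversal and the sixth is functoriality under $t\mapsto 1-t$, so there is no internal proof to compare against. Your uniqueness argument is correct and complete: the reduction of arbitrary endpoints to the $(0,1)$ case via bullets 4 and 5, and then the descent on $p+q$ by shuffling with the single letter $0$ (or, symmetrically, $1$) and observing that all non-leading insertions strictly decrease $p'+q'$, is exactly the standard regularization induction. For existence you invoke tangential base points, which is legitimate but heavy; a more self-contained alternative is the algebraic regularization theorem (Radford/Reutenauer, or Ihara--Kaneko--Zagier), which says that the shuffle algebra on $\{e_0,e_1\}$ is a polynomial ring in $e_0$ and $e_1$ over the subalgebra of convergent words, so the convergent evaluation extends uniquely to a shuffle homomorphism once $e_0,e_1$ are sent to $0$, and bullets 4--6 can then be checked by comparing shuffle homomorphisms that agree on generators.

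One detail in your existence sketch needs correcting, and it is not cosmetic. Under $z\mapsto 1-z$ the pullbacks are $dz/z\leftrightarrow dz/(z-1)$ with \emph{no} minus sign; the sign comes entirely from reversing the path, contributing $(-1)^n$, and there is no second sign to cancel it. What actually follows is
$$I(0;a_1,\ldots,a_n;1)=(-1)^n\,I(0;1-a_n,\ldots,1-a_1;1)\ ,$$
equivalently $I(a_0;a_1,\ldots,a_n;a_{n+1})=I(1-a_0;1-a_1,\ldots,1-a_n;1-a_{n+1})$ \emph{without} reversing the interior letters. A concrete check: $(\ref{Iconvdef})$ gives $I(0;1,0,0;1)=-\zeta(3)$ while $I(0;1,1,0;1)=+\zeta(1,2)=+\zeta(3)$, and these differ by $(-1)^3$. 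So the sixth bullet as printed (and correspondingly \textbf{I3} and \textbf{R4} later in the paper) is off by $(-1)^n$; your ``two signs cancel'' gloss would, if carried out, expose this. Your uniqueness argument is unaffected, since it never uses the sixth bullet.
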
 
The second last equation is simply the reversal of paths formula, the last equation is functoriality with respect to the map  $t\mapsto 1-t$.
The numbers $\zeta(n_1,\ldots, n_r)$ defined for any $n_i\in \NN$ by $(-1)^rI(0;\rho(n_1,\ldots, n_r);1)$ are sometimes called  shuffle-regularized multiple zeta values.

\subsection{Structure of MZV's in low weights} \label{sectMZVstructure}

 Let $\mathcal{Z}_N$ denote the $\Q$-vector space spanned by the set of multiple zeta values $\zeta(n_1,\ldots, n_r)$ with $n_r\geq 2$ of total weight $N=n_1+\ldots+n_r$, and let $\mathcal{Z}$ denote the $\Q$-algebra spanned by all multiple zeta values over $\Q$.  It is the sum of the vector spaces $\mathcal{Z}_N\subset \R$, and conjecturally a direct sum.
 By standard lattice reduction methods,  one can  try to write down a conjectural basis for $\mathcal{Z}$ for weight $\leq N$.  Up to weight 10, one experimentally obtains:

\begin{center}
\begin{tabular}{|c|c|c|c|c|c|c|c|c|c|}
  \hline
 Weight  $N$ & 1 & 2 & 3 & 4 & 5    & 6 & 7 & 8 \\
\hline
$\mathcal{Z}_N$& $\emptyset$ & $\zeta(2)$ & $\zeta(3)$ & $\zeta(2)^2$ & $\zeta(5)$ &$\zeta(3)^2$  & $\zeta(7)$ &  $\zeta(3,5)$  \\
&  &   &  &  & $\zeta(3)\zeta(2)$  &  $\zeta(2)^3$ & $\zeta(5)\zeta(2)$  &  $\zeta(3)\zeta(5)$  \\
& & & && &  & $\zeta(3)\zeta(2)^2$  &  $\zeta(3)^2\zeta(2)$  \\
& & & && &  &    &$\zeta(2)^4$  \\
\hline
 $\dim_{\Q}\mathcal{Z}_N$  & 0 & 1 & 1 & 1 & 2   & 2 & 3 & 4   \\
 \hline
\end{tabular}
\end{center}

\begin{center}
\begin{tabular}{|c|c|c|}
  \hline
 Weight  $N$ & 9 & 10  \\
\hline
$\mathcal{Z}_N$& $\zeta(9)$  & $\zeta(3,7)$\\
&  $\zeta(3)^3$ &    $\zeta(3)\zeta(7)$ \\
&  $\zeta(7)\zeta(2)$ & $\zeta(5)^2$  \\
& $\zeta(5) \zeta(2)^2$& $\zeta(3,5)\zeta(2)$   \\
& $\zeta(3) \zeta(2)^3$&  $\zeta(3)\zeta(5)\zeta(2)$\\
& &  $\zeta(3)^2\zeta(2)^2$\\
& &  $\zeta(2)^5$\\
\hline
 $\dim_{\Q}\mathcal{Z}_N$  & 5 & 7 \\
 \hline
\end{tabular}
\end{center}

The dimensions at the bottom are conjectural, and it is not even  known whether $\zeta(5)$ and $\zeta(3)\zeta(2)$ are linearly independent over $\Q$.

For example, the table implies  that there exists a relation between the two multiple zeta values $\zeta(3)$ and $\zeta(1,2)$ in weight 3, and indeed it was shown by Euler that  $\zeta(3)=\zeta(1,2)$.  In  weight 8 there  appears the first multiple zeta value  $\zeta(3,5)$ which  conjecturally cannot be expressed as a polynomial 
in the single zetas $\zeta(n)$ with coefficients in $\Q$. One expects
$$\{\zeta(2), \zeta(3),\zeta(5),\zeta(7),\zeta(3,5),\zeta(9),\zeta(3,7)\}$$
to be algebraically independent over $\Q$.

\section{Motivic formalism} In what follows, all vector spaces etc are defined over the field $\Q$.
\subsection{The category of mixed Tate motives over $\Z$}
Let $\MT(\Z)$ denote the category of mixed Tate motives over $\Z$  \cite{DG}. This is a Tannakian category whose simple objects are the Tate motives $\Q(n)$, indexed by $n\in \Z$, and which have weight $-2n$. The structure of $\MT(\Z)$ is determined  by the data:
\begin{equation}\label{extdim}
\mathrm{Ext}_{\MT(\Z)}^1(\Q(0),\Q(n)) \cong \left\{
                           \begin{array}{ll}
                             \Q\  & \hbox{if } n\geq 3 \hbox{ is odd}\ ,  \\
                             0\   & \hbox{otherwise} \ ,
                           \end{array}
                         \right.
\end{equation}
 and the fact that the $\mathrm{Ext}^2$'s  vanish.  Thus $\MT(\Z)$ is equivalent to the category of representations of a group  scheme $\mathcal{G}_{\MT}$ over $\Q$, which 
 is a semi-direct product
 \begin{equation} \label{Gexact}
 \mathcal{G}_{\MT} \cong \mathcal{G_U} \rtimes \mathbb{G}_m\ ,
 \end{equation}
 where $\Gu$ is the prounipotent algebraic group over $\Q$ whose Lie algebra is the free Lie algebra with one generator  $\sigma_{2n+1}$ in degree $-(2n+1)$. The generators correspond to $(\ref{extdim})$,
and the  freeness follows from the vanishing of the $\mathrm{Ext}^2$'s. The motivic weight is twice the degree.

\begin{rem}\label{remweight}  Henceforth we shall use the word weight  to refer to 
\emph{half} the motivic weight,  in keeping with the usual terminology for MZVs.
\end{rem}
\begin{defn} Let $\Amt$ denote the graded ring of affine functions on $\Gu$  over $\Q$.  
It is a commutative graded Hopf algebra  whose  coproduct we denote by
$$\Delta: \Amt \To \Amt \otimes_{\Q} \Amt\ .$$
Define a trivial comodule over $\Amt$ to be:
\begin{equation}
\Homt = \Amt \otimes_{\Q} \Q[f_2]\ ,
\end{equation} 
where $f_2$ is defined to be of degree 2. As a graded vector space,
 $$\Homt\cong \bigoplus_{k\geq 0} \Amt[2k] \ ,$$
 where $[2k]$ denotes a shift in degree of $+2k$. We also write the coaction:
$$\Delta: \Homt \To \Amt\otimes_{\Q} \Homt\ .$$
It is determined by  its restriction to $\Amt$ and the formula $\Delta(f_2) = 1\otimes f_2$.
\end{defn}

The structure of  $\Homt$ can be described explicitly as follows. It follows from the remarks above  that $\Amt$ is   non-canonically isomorphic to
the cofree  Hopf algebra on cogenerators $f_{2r+1}$ in degree $2r+1\geq 3$: 
$$\U'=\Q\langle f_3,f_5,\ldots \rangle\ .$$
This has a basis consisting  of all non-commutative words in the $f_{\mathrm{odd}}$'s.
The   notation   $\U'$ is superfluous but useful since  we will need to consider many different isomorphisms $\Amt \cong \U'$.
Again, we denote the coproduct on  $\U'$ by $\Delta$, which is  given by deconcatenation:
 \begin{eqnarray} \Delta:  \U' &\To & \U' \otimes_{\Q} \U' \\
 \Delta(f_{i_1} \ldots f_{i_r}) & = &  1\otimes f_{i_1}\ldots f_{i_r} + f_{i_1}\ldots f_{i_r}\otimes 1 \nonumber   \\
 && \qquad \quad + \quad \sum_{k=1}^{r-1} f_{i_1}\ldots f_{i_k} \otimes f_{i_{k+1}} \ldots f_{i_r} \nonumber
 \end{eqnarray} 
The multiplication on $\U'$ is  given by the shuffle product  $(\ref{shuffdef})$.

By analogy with $\Homt$ let us define a trivial comodule
$$ \U= \Q\langle f_3,f_5,\ldots   \rangle \otimes_{\Q} \Q[f_2]$$
where $f_2$  is of degree $2$ and commutes with the $f_{\mathrm{odd}}$.   The coaction 
$$\Delta: \U \To \U' \otimes_\Q \U$$
satisfies $\Delta(f_2) = 1\otimes f_2$.
  The total degree gives a grading $\U_k$ on $\U$ which we call the weight (remark \ref{remweight}).  Thus we  have a non-canonical isomorphism
\begin{equation}  \label{firstpsi}
\psi: \Homt \cong \U \end{equation} 
of graded algebra-comodules, which induces an isomorphism of the underlying graded Hopf algebras $\Amt$ and  $\U'$, and  maps $f_2$ to $f_2$.
 
\begin{lem} Let $d_k=\dim 
\U_k=\dim \Homt_k$. Then
\begin{equation} \label{enumeration} 
\sum_{k\geq 1} d_k t^k = {1\over  1-t^2-t^3}\ . \end{equation}
In particular, $d_0=1, d_1=0, d_2=1$ and  $d_k= d_{k-2}+d_{k-3}$ for $k\geq 3$.
\end{lem}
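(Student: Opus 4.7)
The plan is to compute the Hilbert (Poincaré) series of $\U$ directly from the explicit description $\U = \Q\langle f_3,f_5,\ldots\rangle \otimes_\Q \Q[f_2]$, and then to derive the recurrence by clearing denominators.

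First I would compute the Hilbert series of $\U' = \Q\langle f_3,f_5,\ldots\rangle$. Since the stated basis consists of \emph{all} non-commutative words in the generators $f_{2r+1}$ ($r\geq 1$), $\U'$ is as a graded vector space the tensor algebra on $V = \bigoplus_{r\geq 1} \Q\, f_{2r+1}$. The Hilbert series of $V$ is
\[
h_V(t) = \sum_{r\geq 1} t^{2r+1} = \frac{t^3}{1-t^2},
\]
so, summing the geometric series in the usual way for a tensor algebra,
\[
H_{\U'}(t) = \sum_{n\geq 0} h_V(t)^n = \frac{1}{1 - \frac{t^3}{1-t^2}} = \frac{1-t^2}{1-t^2-t^3}.
\]

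Next I would use that $\U = \U' \otimes_\Q \Q[f_2]$ as graded vector spaces, with $f_2$ of degree $2$. Hilbert series multiply under tensor product, and $H_{\Q[f_2]}(t) = 1/(1-t^2)$, so
\[
H_\U(t) \;=\; \frac{1}{1-t^2}\cdot\frac{1-t^2}{1-t^2-t^3} \;=\; \frac{1}{1-t^2-t^3},
\]
which is the claimed generating function (and equals $H_{\Homt}(t)$ by the non-canonical isomorphism $\psi$ of $(\ref{firstpsi})$). Multiplying both sides of $(1-t^2-t^3)\sum_k d_k t^k = 1$ and comparing coefficients gives $d_0 = 1$, $d_1 = 0$, $d_2 = 1$, and $d_k - d_{k-2} - d_{k-3} = 0$ for $k\geq 3$, which is the stated recurrence.

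There is no real obstacle here; the only point that requires care is to ensure that $\U'$ is genuinely a \emph{free} graded algebra on the $f_{2r+1}$ — equivalently, that its underlying graded vector space has a basis indexed by non-commutative words in these generators. This is already asserted in the paragraph preceding the lemma (as a consequence of the vanishing of $\mathrm{Ext}^2$ in $\MT(\Z)$ and the cofreeness of $\Amt$ on cogenerators in odd degrees $\geq 3$), so the proof reduces to the elementary generating-function manipulation above.
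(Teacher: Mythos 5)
Your proof is correct and follows essentially the same route as the paper: both compute the Poincar\'e series of $\Q\langle f_3,f_5,\ldots\rangle$ as the geometric series $\bigl(1-(t^3+t^5+\cdots)\bigr)^{-1} = \frac{1-t^2}{1-t^2-t^3}$ and then multiply by $\frac{1}{1-t^2}$ for the polynomial factor $\Q[f_2]$. The only difference is that you spell out the tensor-algebra justification and the coefficient comparison for the recurrence, which the paper leaves implicit.
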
 
\begin{proof} The Poincar\'e series of $\Q\langle f_3,f_5, \ldots \rangle$ is given by 
$${1\over 1-t^3-t^5-\ldots } = {1-t^2 \over 1-t^2-t^3}$$
Multiplying by the Poincar\'e series ${1\over 1-t^2}$  for $\Q[f_2]$ gives $(\ref{enumeration})$.
\end{proof}
 
If we define the depth of $f_{2i+1}$ to be 1 for all $i>0$, and the depth of  $f_2$ to be  0, then 
 we obtain a grading on $\U$ which simply counts the number of odd elements $f_{2i+1}$. 
 The \emph{motivic depth} is the associated  increasing filtration and can be defined in terms of the coaction 
 $\Homt \rightarrow \Amt \otimes_{\Q} \Homt$.
  One checks that  the  motivic depth filtration induced  on $\Homt$ by $(\ref{firstpsi})$
is well-defined, and independent of the choice of $\psi$. In other words, the filtration is motivic, but the grading is not. This stems from the fact that $\sigma_{2i+1}$
is well-defined only up to addition  of commutators of $\sigma_j$ for $j< 2i+1$.

\begin{example} Compare the structure of $\Homt$ in low weights with  the table of multiple zeta values given in \S\ref{sectMZVstructure}:
\vspace{0.1in}

\begin{center}
\begin{tabular}{|c|c|c|c|c|c|c|c|c|c|c|c|}
  \hline
 Weight  $k$ & 1 & 2 & 3 & 4 & 5    & 6 & 7 & 8 & 9 & 10\\
\hline
 & $\emptyset$ & $f_2$ & $f_3$ & $f_2^2$ & $f_5$ &$f_3\!\sha\! f_3$  & $f_7$ &  $f_5f_3$ & $f_9$ & $f_7f_3$  \\
Basis for &  &   &  &  & $f_3f_2$  &  $f_2^3$ & $f_5f_2$  &  $f_3\!\sha\! f_5$  & $f_3\!\sha \!f_3\!\sha \!f_3$ & $f_3\sha f_7$\\
  $\Homt_k$ & & & && &  & $f_3f_2^2$  &  $f_3\!\sha \!f_3 f_2$ & $f_7 f_2$ &  $f_5\sha f_5 $ \\
& & & && &  &    &$f_2^4$  & $f_5f_2^2$ & $f_5f_3f_2$\\
& & & && &  &    &  & $f_3f_2^3$ & $f_3\!\sha \!f_5f_2$\\
& & & && &  &    & &  & $f_3\!\sha\! f_3f_2^2$\\
& & & && &  &    & &  & $f_2^5$\\
\hline
 $\dim$ & 0 & 1 & 1 & 1 & 2   & 2 & 3 & 4 & 5 & 7   \\
 \hline
\end{tabular}
\end{center}
\vspace{0.1in}
\noindent
The following well-known conjecture is of  a transcendental nature.
\begin{conj} The  $\Q$-algebra of MZV's is graded by the weight:  
$$\mathcal{Z} \cong \bigoplus_{k \geq 0} \mathcal{Z}_k$$
 and there is an isomorphism  of graded algebras:
$$\mathcal{Z} \cong \Homt\ .$$
\end{conj}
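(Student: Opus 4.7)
The plan is to produce a graded $\Q$-algebra homomorphism $\Homt \to \mathcal{Z}$ and argue about its kernel and image. First, I would use the machinery of $\MT(\Z)$ to construct motivic multiple zeta values $\zeta^{\mathfrak{m}}(n_1,\ldots,n_r) \in \Homt$ as motivic iterated integrals on $\mathbb{P}^1 \setminus \{0,1,\infty\}$ with tangential base points at $0$ and $1$; the de Rham--Betti comparison then gives a canonical period homomorphism
$$ \mathrm{per}: \Homt \To \R $$
of graded $\Q$-algebras satisfying $\mathrm{per}(\zeta^{\mathfrak{m}}(n_1,\ldots,n_r)) = \zeta(n_1,\ldots,n_r)$. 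By construction, the image of $\mathrm{per}$ is contained in $\mathcal{Z}$, and $\mathrm{per}$ respects the weight grading.

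Second, I would establish surjectivity of $\mathrm{per}$ onto $\mathcal{Z}$ by showing that the motivic MZVs already span $\Homt$. Using the explicit isomorphism $\psi: \Homt \cong \U$ of $(\ref{firstpsi})$ and the derivation operators $\partial^{\phi}_{2k+1}$ of $\S 4$, one can inductively decompose any element of $\Homt$ into a $\Q$-linear combination of motivic MZVs; this is essentially the algorithm described in $\S 5$, and yields Brown's generation theorem (that the Hoffman MZVs span $\Homt$). Applying $\mathrm{per}$ then gives surjectivity onto $\mathcal{Z}$, and in particular proves the upper bound $\dim_{\Q} \mathcal{Z}_k \leq d_k$ with $d_k$ as in $(\ref{enumeration})$.

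Third, to upgrade the surjection $\mathrm{per}: \Homt \twoheadrightarrow \mathcal{Z}$ to an isomorphism, one must prove injectivity: every $\Q$-linear relation among real MZVs must lift to a relation among motivic MZVs. Note that the direct-sum statement $\mathcal{Z} = \bigoplus_k \mathcal{Z}_k$ is a consequence, since any relation mixing weights would provide a nonzero element of $\ker(\mathrm{per})$ that does not respect the grading of $\Homt$. Conversely, weight-homogeneous injectivity is equivalent to $\dim_{\Q} \mathcal{Z}_k = d_k$ and to the direct-sum decomposition.

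The hard part --- indeed the entire content of the conjecture beyond what is currently known --- is this injectivity, which is a transcendence assertion in the spirit of the Grothendieck period conjecture. With present methods it is completely out of reach: one cannot even exclude $\zeta(5) \in \Q \cdot \zeta(2)\zeta(3)$, nor show that $\pi$ and $\zeta(3)$ are algebraically independent. Any serious attack would have to supply a genuinely new mechanism producing transcendence of periods of mixed Tate motives over $\Z$, going well beyond Ap\'ery-type irrationality results or Nesterenko-style approaches. In short, the motivic/coalgebraic framework reduces the conjecture to this single injectivity statement, but provides no tool to establish it.
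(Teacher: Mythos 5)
This statement is a \emph{conjecture}, not a theorem: the paper offers no proof and explicitly flags it as ``of a transcendental nature,'' noting in \S\ref{sectMZVstructure} that even the linear independence of $\zeta(5)$ and $\zeta(3)\zeta(2)$ over $\Q$ is unknown. You correctly recognized this, and your analysis of the logical structure --- surjectivity of the period map $per:\Ho\to\mathcal{Z}$ is accessible while injectivity is the genuinely open transcendence assertion, with the weight-grading of $\mathcal{Z}$ following from injectivity by the argument you give --- is accurate and consistent with the paper's remarks.

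One correction of emphasis: you attribute the generation theorem (that the Hoffman elements $\zeta^{\mathfrak{m}}(n_1,\ldots,n_r)$ with $n_i\in\{2,3\}$ span $\Ho$) to the decomposition algorithm of \S5 and the operators $\partial^{\phi}_{2k+1}$. This inverts the logical dependency. The algorithm of \S5 \emph{presupposes} that some basis $B$ spans $\Ho$ in each weight, and the footnote in \S4.3 explicitly cites \cite{Br23} for the existence of a suitable $\phi$. The proof in \cite{Br23} that the Hoffman elements span does use the operators $D_{2r+1}$, but the crucial step is a nontrivial 2-adic non-vanishing argument for the coefficients $c^{\phi}_{2n+1}\zeta^{\mathfrak{m}}(2,\ldots,2,3,2,\ldots,2)$ (recorded in comment (iv) of \S5.3), which is quite different from, and much harder than, the numerical bootstrapping in \S5. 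The \S5 algorithm is a practical tool that converts the generation theorem into explicit decompositions; it does not establish it. Also note that your intermediate object should be $\Ho$ rather than $\Homt$: the period map is defined on $\Ho$ (equation $(\ref{periodmap})$), and the identification $\Ho\cong\Homt$ is itself the main theorem of \cite{Br23} via $(\ref{Hoembed})$.

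Otherwise your reduction of the conjecture to injectivity of the period map, and your assessment that this injectivity is a Grothendieck-period-conjecture-type statement entirely beyond current technique, is exactly the correct reading of why this remains a conjecture.
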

The first part  implies that there should be no relations between multiple zeta values of different weights.
The second implies in particular that the multiple zeta values should inherit the coaction of the motivic Hopf algebra $\Amt$.  To see what this coaction should be  requires introducing motivic multiple zetas, for which the independence in different weights is automatic.
\end{example}

\subsection{Motivic multiple zeta values.} 
In \cite{GG}, Goncharov showed  how to lift the ordinary iterated integrals $I(a_0;\ldots;a_{n+1})$,  
where $a_i \in \overline{\Q}$ to  periods of mixed Tate motives. In the case where the $a_i \in \{0,1\}$, he showed that these motives are unramified over $\Z$ (see also \cite{GM}), 
and therefore define  objects in $\Amt$. In his version of motivic multiple zeta values, the element corresponding to $\zeta(2)$ is zero.

One can show  using the formalism of \cite{DG} that these can in turn be lifted to elements of $\Homt$ in such a way that the motivic version of $\zeta(2)$ is non-zero. However, the tollation involves making some choices (see  \cite{Br23}, \S2 for the definitions). In summary:

\begin{thm}  \label{propdefI}  There exists a sub-Hopf algebra $\Ao\subset \Amt$ and  a  graded algebra-comodule $\Ho$ over $\Ao$, which satisfies the following properties.
It is spanned by  elements (called motivic iterated integrals)
\begin{equation}\label{Iframed} 
\Imot(a_0;a_1,\ldots, a_n;a_{n+1}) \in \Ho_{n}
\end{equation}
where  $a_0,\ldots, a_{n+1} \in \{0,1\}$,  such that:
\begin{description} \label{lemImotrelations}
\item[I0] $\Imot(a_0;a_1,\ldots, a_n;a_{n+1}) =0  \hbox{ if } a_0=a_{n+1}  \hbox{ and } n\geq1$
\item[I1] $\Imot(a_0;a_1;a_2)=0$ and $\Imot(a_0;a_1) = 1$ for all $a_0,a_1,a_2 \in \{0,1\}$
\item[I2] $ \Imot(0;a_1,\ldots, a_n;1) = (-1)^n \Imot(1;a_n,\ldots, a_1;0) $
\item[I3] $\Imot(0;a_1,\ldots, a_n;1) =   \Imot(0;1-a_n,\ldots, 1-a_1;1)$
\end{description}
Furthermore, for any $a_i,x,y \in \{0,1\}$ the shuffle product formula holds:
 $$\Imot(x;a_1,\ldots, a_r;y) \Imot(x;a_{r+1},\ldots, a_{r+s};y) = \sum_{\sigma \in \Sigma(r,s)} \Imot(x;a_{\sigma(1)},\ldots, a_{\sigma(r+s)};y)\ .$$
 There is a well-defined map (the  period)
 \begin{eqnarray} \label{periodmap} per: \Ho &\rightarrow&  \R   \\
 \Imot(a_0;a_1,\ldots, a_n;a_{n+1}) & \To & I(a_0;a_1,\ldots,a_n; a_{n+1}) \nonumber 
   \end{eqnarray}
   which is a ring homomorphism. 
   In particular, all relations satisfied by the $\Imot(a_0;a_1,\ldots, a_n;a_{n+1})$ are also satisfied by the $I(a_0;a_1,\ldots, a_n;a_{n+1})$.
 
 Finally, there is a non-canonical isomorphism
\begin{equation}\label{Hotens} \Ho \cong \Ao\otimes_{\Q} \Q[\zetam(2)] \ ,\end{equation}
where $\zetam(2)$ denotes the motivic iterated integral $-\Imot(0;1,0;1)$. As a consequence, there is a non-canonical embedding of algebra-comodules 
\begin{equation}\label{Hoembed}
\Ho \hookrightarrow \Homt 
\end{equation}\
which maps $\zetam(2)$ to $f_2$.
 \end{thm}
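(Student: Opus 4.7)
The plan is to build $\Ao$ and $\Ho$ using the motivic fundamental groupoid of $X = \Pro^1\setminus\{0,1,\infty\}$ with tangential basepoints at $0$ and $1$, following Goncharov \cite{GG} and Deligne--Goncharov \cite{DG}. First I would invoke Goncharov's construction to obtain the motivic iterated integrals as framings of objects in the motivic fundamental groupoid; by \cite{GM} these objects lie in $\MT(\Z)$, so their framings give elements of $\Amt$ which together generate a sub-Hopf algebra $\Ao$. The relations I0 and I1 are built into the definition (paths of length zero or with coincident endpoints are trivial); I2 follows from the groupoid inversion $\gamma\mapsto\gamma^{-1}$ and the reversal formula of \S\ref{sectGenitin}; I3 comes from motivic functoriality applied to the involution $t\mapsto 1-t$ of $X$, which exchanges $0$ and $1$ together with their tangential basepoints; and the shuffle product is the dual of the group-like property of the canonical de Rham path on $X$.

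The main obstacle is that in $\MT(\Z)$ one has $\mathrm{Ext}^1(\Q(0),\Q(2))=0$, so the framing of $\zeta(2)$ in Goncharov's $\Ao$ vanishes, whereas we want a non-zero $\zetam(2)$. To fix this, following \cite{Br23}, \S2, I would enlarge the picture by formally adjoining a polynomial generator $\zetam(2)=-\Imot(0;1,0;1)$ with trivial coaction $\Delta(\zetam(2))=1\otimes\zetam(2)$, and set
\[
\Ho \;=\; \Ao\otimes_{\Q}\Q[\zetam(2)]
\]
as a graded algebra-comodule. The motivic iterated integrals $\Imot(a_0;a_1,\ldots,a_n;a_{n+1})\in\Ho$ are then defined by decomposing each Goncharov integral according to this tensor product, with the ``$\zetam(2)$-part'' extracted by means of an explicit Hopf algebra section $\Ao\to\Q$ modelled on the embedding $\Ao\hookrightarrow\Amt$. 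One must then verify that the relations I0--I3 and the shuffle product are preserved under this lift; this reduces to the fact that these relations hold modulo $\zetam(2)$ (where they are Goncharov's) together with the fact that $\zetam(2)$ is central, algebraically independent over $\Ao$, and carries a trivial coaction.

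With this in hand, the period map $per: \Ho\to\R$ is constructed by combining Goncharov's period map on $\Ao$ with the evaluation $\zetam(2)\mapsto \zeta(2)$, and it is a ring homomorphism essentially by functoriality of realizations. The isomorphism $(\ref{Hotens})$ is then tautological from the construction, and the embedding $(\ref{Hoembed})$ of $\Ho$ into $\Homt$ follows from $\Ao\hookrightarrow\Amt$, any non-canonical identification $\Amt\cong\U'$, and the obvious identification $\Q[\zetam(2)]\cong\Q[f_2]$ sending $\zetam(2)\mapsto f_2$. The genuinely hard step in the whole argument is the construction of the Hopf-algebra section used to split off $\zetam(2)$ coherently with the shuffle product and the comodule structure; this is the main technical content of \cite{Br23}, \S2, and is what licenses the non-canonical choices referred to in the theorem statement.
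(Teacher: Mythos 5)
Your account of the scaffolding matches what the paper gestures towards: the motivic fundamental groupoid of $\Pro^1\setminus\{0,1,\infty\}$ with tangential basepoints, Goncharov's framing construction giving $\Ao\subset\Amt$, the origin of \textbf{I0}--\textbf{I3} and the shuffle relation in the groupoid structure and functoriality under $t\mapsto 1-t$, and the obstacle that $\mathrm{Ext}^1_{\MT(\Z)}(\Q(0),\Q(2))=0$ forces $\Imota(0;1,0;1)=0$ in $\Ao$. This is all consistent with the references to \cite{GG}, \cite{GM}, \cite{DG}, \cite{Br23}.

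The gap is in the passage from $\Ao$ to $\Ho$. You propose to set $\Ho := \Ao\otimes_{\Q}\Q[\zetam(2)]$ and then define each $\Imot$ by ``decomposing each Goncharov integral according to this tensor product, with the $\zetam(2)$-part extracted by means of an explicit Hopf algebra section $\Ao\to\Q$.'' This cannot work, because the $\zeta(2)$-information has already been destroyed in $\Ao$. Concretely, $\Imota(0;1,0,1,0;1)=\zetaa(2,2)=0$ in $\Ao$, whereas one needs $\Imot(0;1,0,1,0;1)=\zetam(2,2)=\tfrac{3}{10}\zetam(2)^2\neq 0$ in $\Ho$; no map $\Ao\to\Q$ can recover the coefficient $\tfrac{3}{10}$ from $0$, and the only graded Hopf algebra map $\Ao\to\Q$ is the counit in any case. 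The construction in \cite{Br23}, \S 2 runs in the opposite direction: $\Ho$ is built directly and $\Ao$ is \emph{derived} as its quotient by $\zetam(2)$. The $\Imot$'s are defined as matrix coefficients in the ring of motivic periods --- the affine ring of the Betti--de~Rham comparison torsor $\mathrm{Isom}^{\otimes}(\omega_{dR},\omega_B)$, rather than of the unipotent group $\Gu$ itself. That ring is strictly larger than $\Amt=\Ok(\Gu)$ precisely because $\Q(2)$, though it contributes nothing to $\mathrm{Ext}^1$, carries a non-trivial comparison class $(2\pi i)^2$, and that is where $\zetam(2)$ lives. One then has $\Ao=\Ho/\zetam(2)\Ho$, and $(\ref{Hotens})$ is the assertion that the surjection $\Ho\twoheadrightarrow\Ao$ admits a non-canonical comodule-algebra splitting; producing that splitting, not a ``section $\Ao\to\Q$,'' is where the non-canonical choices in the theorem enter. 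Your sketch has the arrow pointing the wrong way.
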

\begin{defn} Let $n_1,\ldots, n_r\in \NN$, where $n_r\geq 2$. Define the \emph{motivic multiple zeta value} to be the element in $\Ho$ given by:
$$\zetam(n_1,\ldots, n_r) = (-1)^n \Imot(0; \rho(n_1,\ldots, n_r) ;1)\ .$$
Its period is $\zeta(n_1,\ldots, n_r)$.
\end{defn}
Note that in our setting the element $\zetam(2)$ is non-zero.  

 \begin{rem} 
 The preceding theorem is rather powerful. For instance, it immediately implies that 
 $$\dim_{\Q} \mathcal{Z}_k \leq \dim_{\Q} \Ho_{k} \leq \dim_{\Q} \Homt _k= d_k$$
 where the numbers  $d_k$ are defined by $(\ref{enumeration})$. This theorem was first proved independently  by Goncharov (see Deligne-Goncharov \cite{DG}) and  Terasoma \cite{T}. This  upper bound on    $ \dim_{\Q} \Ho_{k}$ comes from $(\ref{Hoembed})$. 
 The main result of \cite{Br23} is  the lower bound    $ \dim_{\Q} \Ho_{k}\geq d_k$, which  in turn implies that  $(\ref{Hoembed})$ is an isomorphism. We shall not need this fact for the sequel. 
  \end{rem}

The various choices made above will be absorbed into a single morphism  
 of graded algebra-comodules \begin{equation}  \label{firstphi}
\phi: \Ho \To  \U\end{equation}
which is obtained by composing $(\ref{Hoembed})$ with $(\ref{firstpsi})$.
 It maps $\zetam(2)$ to $f_2$, and induces a morphism of Hopf algebras $\phi: \Ao \rightarrow \U'$.

 \subsection{Notations} The motivic multiple zeta values can exist on three different levels: the highest being  the comodule $\Ho$; next 
 the Hopf algebra 
 $$\Ao = \Ho / \zetam(2) \Ho$$
 in which $\zetam(2)$ is killed; and finally the Lie coalgebra 
 \begin{equation}\label{lodef}
\Lo = {\Ao_{>0} \over \Ao_{>0} \Ao_{>0}}\ ,
\end{equation}
of indecomposable elements of $\Ao$. We use the notation $\zetam$ to denote an element in $\Ho$; $\zetaa$ its image in $\Ao$; and $\zetal$ its image in $\Lo$: 
  \begin{equation}
 \begin{array}{ccccc}
 \Ho_{>0}  & \To  &   \Ao_{>0} &  \To & \Lo \\
 \rin &   &   \rin &   & \rin \\
 \zetam(w) & \mapsto  & \zetaa(w)  & \mapsto & \zetal(w)
\end{array}
 \end{equation}
Thus the elements $\zetaa(n_1,\ldots, n_r)$ are exactly the motivic multiple zeta values considered by Goncharov in \cite{GG}, and $\zetaa(2)=0$.
 We use the same superscripts for  the motivic iterated integrals, viz. $\Imot$, $\Imota$, $\Imotl$.
 
\subsection{Formula for the coaction} 
Goncharov computed the coproduct $\Delta: \Ao \rightarrow \Ao \otimes_{\Q} \Ao$  on the elements $\Imota(a_0;\ldots; a_{n+1})$ in \cite{GG}, Theorem 1.2.
The coaction on $\Ho$ is given by the same formula, after interchanging the two right-hand factors (see \cite{Br23}, \S2).
\begin{thm} \label{thmGonchCoproduct}  The coaction
 \begin{equation} \label{Hcoaction}
 \Delta: \Ho \To \Ao \otimes_{\Q} \Ho\ ,
 \end{equation} 
 can be computed explicitly as follows. 
For any $a_0,\ldots, a_{n+1}\in \{0,1\}$,      the  image of a generator   $\Delta\,  \Imot(a_0;a_1,\ldots, a_{n};a_{n+1})$ is given by 
\begin{equation}  \sum_{i_0<i_1<  \ldots<  i_k<i_{k+1}}  
 \!\!\! \Big( \prod_{p=0}^k \Imota(a_{i_p}; a_{i_p+1}, \tdots ,a_{i_{p+1}-1} ;a_{i_{p+1}}) \Big)\otimes \Imot(a_0;a_{i_1},\tdots, a_{i_k}; a_{n+1})  \nonumber \end{equation}
where  the sum is over indices satisfying $i_0=0$ and $i_{k+1}=n+1$, and all   $0\leq k\leq n$. Note that the trivial elements  $\Imota(a;b)$ are equal to $1$.
\end{thm}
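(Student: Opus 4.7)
The plan is to derive this formula by appealing to the combinatorial/geometric description of the Hopf algebra of motivic iterated integrals, following Goncharov's original approach for $\Ao$ and then transferring to the comodule $\Ho$. The starting point is that the motivic iterated integrals arise as matrix coefficients of the motivic fundamental torsor of paths on $\Pro^1\setminus\{0,1,\infty\}$ with tangential basepoints, and the Hopf algebra structure on this torsor is, by the general formalism of Deligne--Goncharov, identified with (a subquotient of) the reduced bar construction on the de Rham complex. The coaction itself therefore exists abstractly; only the explicit formula requires proof.

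First I would recall Goncharov's theorem for $\Ao$ (\cite{GG}, Thm.~1.2), which states precisely the same formula but with both tensor factors living in $\Ao$. The proof of that statement proceeds by representing each iterated integral by a ``semicircular polygon'' with marked points labelled $a_0,a_1,\ldots,a_n,a_{n+1}$, and showing that the coproduct is computed as a sum over all polygons inscribed in the original one (i.e. over all subsequences $i_0<i_1<\cdots <i_{k+1}$ containing the endpoints); the inscribed polygon yields the ``main'' factor, while the arcs between consecutive inscribed vertices yield the ``inner'' factors. Coassociativity and the shuffle compatibility of this formula are verified directly from the combinatorics of choosing nested subsequences.

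Next I would lift to $\Ho$. Since $\Ho \cong \Ao\otimes_\Q \Q[\zetam(2)]$ as graded algebra-comodules (by \eqref{Hotens}), the coaction $\Delta: \Ho\to \Ao\otimes_\Q \Ho$ is determined by its reduction modulo $\zetam(2)$, which is Goncharov's coproduct on $\Ao$, together with $\Delta(\zetam(2)) = 1\otimes \zetam(2)$. The claimed formula manifestly reduces, modulo $\zetam(2)$ on the right, to Goncharov's formula, because all inner integrals lie in $\Ao$ by construction and the rightmost factor is simply the lift of Goncharov's rightmost factor from $\Ao$ back to $\Ho$. Hence the two formulas agree up to terms killed by $\Ao\otimes \zetam(2)\Ho$. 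To pin down the correct $\Ho$-valued lift of the rightmost factor, one verifies on the unique generator $\Imot(0;1,0;1) = -\zetam(2)$ that the formula gives $1\otimes \Imot(0;1,0;1) + \Imot(0;1,0;1)\otimes 1$, matching the requirement $\Delta(\zetam(2)) = 1\otimes \zetam(2)$ plus primitivity in the Hopf algebra $\Ao$.

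The main obstacle, as ever with this kind of formula, is not the algebraic bookkeeping but checking that the right-hand side is \emph{well-defined} on the comodule $\Ho$, i.e., that it respects the relations $\mathbf{I0}$--$\mathbf{I3}$ and the shuffle product. The vanishing relations $\mathbf{I0}$--$\mathbf{I1}$ follow because any subsequence choice forces at least one inner factor to be of the form $\Imota(a;b)=1$ or $\Imota(a;\cdots;a)=0$; the reversal and $t\mapsto 1-t$ symmetries follow from the corresponding symmetries applied termwise; and shuffle compatibility is the content of Chen's theorem for the bar complex, which amounts to a bijection between shuffled subsequences of two words and subsequences of their shuffles. These checks are ultimately combinatorial, and constitute the bulk of Goncharov's original argument in \cite{GG}; since the formula over $\Ho$ differs from that over $\Ao$ only in the location of the $\zetam(2)$-enrichment, these verifications transfer verbatim.
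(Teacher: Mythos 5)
Your proposal takes essentially the same route as the paper, which in fact offers no proof of its own: the paragraph preceding the statement simply cites Goncharov's Theorem 1.2 in \cite{GG} for the coproduct on $\Ao$, and then cites \cite{Br23}, \S2 for the lift to the comodule $\Ho$, noting that one obtains the $\Ho$-coaction from Goncharov's formula ``after interchanging the two right-hand factors.'' Your sketch of the polygon combinatorics, the transfer via the non-canonical splitting \eqref{Hotens}, and the well-definedness checks are a reasonable unpacking of what those references contain.

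One small inaccuracy worth flagging: you write that the rightmost tensor factor in the stated formula ``is simply the lift of Goncharov's rightmost factor,'' but in Goncharov's own normalization the main (quotient) integral $\Imota(a_0;a_{i_1},\ldots,a_{i_k};a_{n+1})$ sits on the \emph{left} and the product of arc integrals on the right; the paper explicitly swaps these when passing from $\Delta:\Ao\to\Ao\otimes\Ao$ to $\Delta:\Ho\to\Ao\otimes\Ho$, precisely so that the factor that must retain the $\zetam(2)$-content lands in $\Ho$ while the arc integrals, being framed, live in $\Ao$. Your argument is not invalidated by this -- the swap is purely notational -- but getting the direction right is exactly what makes the ``modulo $\zetam(2)$'' reduction step unambiguous.
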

This formula has an elegant   interpretation in terms of cutting off segments of a semicircular polygon, for which we refer to \cite{GG} for further details.

\subsection{Zeta cogenerators.} 
The following lemma (\cite{GG}, Theorem 6.4) is an easy consequence of 
 theorem  \ref{propdefI}, theorem \ref{thmGonchCoproduct}, and  the fact that  $\zeta(2n+1) \neq 0$. 
\begin{lem} For $n\geq 1$,   $\zetam(2n+1)\in \Ho$ is non-zero and satisfies
$$\Delta\, \zetam(2n+1) = 1 \otimes \zetam(2n+1) + \zetaa(2n+1) \otimes 1\ .$$
Furthermore, Euler's relation for even zeta values implies that
$$\zetam(2n) = b_n \zetam(2)^n $$
where 
 $b_n=(-1)^{n+1}{1\over 2} B_{2n} {(24)^n \over (2n)!}$, and the $B_{2n}$ are Bernoulli numbers.
 \end{lem}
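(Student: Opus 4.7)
My plan is to compute the coaction of $\zetam(2n+1) = -\Imot(0;1,0^{2n};1)$ by direct application of Goncharov's formula (Theorem \ref{thmGonchCoproduct}), to verify that only two terms in the resulting sum survive, and then to derive the Euler relation for $\zetam(2n)$ by combining the analogous computation in the even case with the structure of primitives in $\Ao$ and the period map. The two surviving terms, corresponding to the coarsest partition $k=0$ and the finest partition $k=2n+1$, produce (after the overall sign $(-1)^{2n+1}$) exactly $\zetaa(2n+1)\otimes 1$ and $1\otimes \zetam(2n+1)$, respectively. Non-vanishing of $\zetam(2n+1)$ then follows from $per(\zetam(2n+1)) = \zeta(2n+1) \neq 0$.

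The bulk of the calculation is showing that every intermediate term ($1 \leq k \leq 2n$) vanishes. I would split cases according to whether position $1$ --- the unique interior index carrying the letter $1$ in the word $0\,1\,0^{2n}\,1$ --- belongs to the chosen subset $\{i_1,\ldots,i_k\}$. If position $1$ is omitted, the first gap $[0,i_1]$ has both endpoints $0$ with $a_1=1$ among its interior letters, so the factor vanishes by \textbf{I0}. If $i_1=1$, then every later gap has endpoints in $\{0,1\}$ with only zeros in the interior; the corresponding $\Imota$-factor is either $1$ (length-one gap), zero by \textbf{I1} (length-two gap), zero by \textbf{I0} (equal endpoints, length $\geq 3$), or equals $\pm\Imota(0;0^m;1)$, which itself vanishes because the shuffle product gives $\Imota(0;0;1)^m = m!\,\Imota(0;0^m;1)$ while $\Imota(0;0;1)=0$ by \textbf{I1}. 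Thus every factor is nonzero only when every gap has length $1$, which forces $k=2n+1$ and contradicts $k\leq 2n$.

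The same computation applied to the word $0\,1\,0^{2n-1}\,1$ yields $\Delta\,\zetam(2n) = 1\otimes\zetam(2n) + \zetaa(2n)\otimes 1$, so $\zetaa(2n)$ is primitive in $\Ao$. Since $\Ao$ is a sub-Hopf algebra of $\Amt \cong \U' = \Q\langle f_3,f_5,\ldots\rangle$, whose primitives are the cogenerators $f_{2k+1}$ (supported in odd degree), we conclude $\zetaa(2n)=0$ and hence $\Delta\,\zetam(2n) = 1\otimes\zetam(2n)$. Writing $\zetam(2n)=\sum_j a_j\,\zetam(2)^j$ via the decomposition $\Ho \cong \Ao\otimes_{\Q}\Q[\zetam(2)]$, multiplicativity of $\Delta$ together with $\Delta\,\zetam(2) = 1\otimes\zetam(2)$ reduces this to $\Delta a_j = 1\otimes a_j$ in $\Ao$; applying $\mathrm{id}\otimes\epsilon$ then forces each $a_j \in \Q$. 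Hence $\zetam(2n)\in\Q[\zetam(2)]$, and by weight $\zetam(2n) = c\,\zetam(2)^n$ for some $c\in\Q$; applying the period map and using the classical Euler evaluation $\zeta(2n) = (-1)^{n+1}B_{2n}(2\pi)^{2n}/(2(2n)!)$ together with $\zeta(2) = \pi^2/6$ identifies $c$ with $b_n$.

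The main obstacle is the combinatorial case analysis of the coaction: although elementary, it requires careful use of all four properties \textbf{I0}--\textbf{I3} and the shuffle relation. The essential conceptual input in the even case is the absence of primitives of $\Ao$ in even weight, reflecting the vanishing of $\mathrm{Ext}^1_{\MT(\Z)}(\Q(0),\Q(2n))$ for $n \geq 1$.
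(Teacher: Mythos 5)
Your proof is correct and essentially reproduces the argument that the paper leaves to the reader, citing \cite{GG}, Theorem 6.4, and the preceding theorems. The coaction computation via Goncharov's formula, the case split on whether $i_1=1$, the vanishing of $\Imota(0;0^m;1)$ from the shuffle relation together with \textbf{I1}, the identification of even-weight primitives of $\Ao$ as zero via the embedding $\Ao\subset\Amt\cong\Q\langle f_3,f_5,\ldots\rangle$, the deduction that $\zetam(2n)\in\Q[\zetam(2)]$ from the trivial coaction, and the final matching of the rational constant via the period map and Euler's evaluation are all sound. This is the same route the paper has in mind; you have simply spelled out the steps compressed into the single sentence preceding the lemma.
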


We can therefore normalize our choice of  map   $(\ref{firstphi})$  so that
$$\Ho \overset{\phi}{\To} \U  $$
maps   $\zetam(2n+1)$ to $f_{2n+1}$.   For notational convenience  we define
\begin{equation} \label{fndef} 
f_{2n} = b_n\, f_2^n   \in \U_{2n} 
\end{equation}
where $b_n$ is defined in the previous lemma. We can therefore write:
\begin{equation} \label{phinormed}
\phi(\zetam(N))=f_N \quad   \hbox{ for all } \quad  N\geq 2 \ . 
\end{equation}

\begin{rem}
If $\xi \in \Ho$ is  of weight $N$ then $\xi'=\xi+\alpha\, \zetam(N)$, for any $\alpha \in \Q$,  cannot be distinguished from $\xi$ using the coaction $\Delta$. This is the basic
reason why our decomposition algorithm ($\S5$) is not exact.
\end{rem}

\section{The derivations $\partial_{2n+1}$} In order to simplify the formula for the coaction  $(\ref{Hcoaction})$, it is convenient to consider an infinitesimal version of it. We first consider the comodule $\U$.

\subsection{Truncation operators on $\U$}  In order  to detect elements in $\U$ we can use a set of derivations as follows. 
For each $n\geq 1$, define truncation maps 
\begin{eqnarray} \label{truncdef}
\partial_{2n+1}: \Q\langle f_3,f_5,\ldots \rangle  & \rightarrow & \Q\langle f_3,f_5,\ldots  \rangle \\
 \partial_{2n+1} (f_{i_1}\ldots f_{i_r}) & =  &\left\{
                           \begin{array}{ll}
                             f_{i_2}\ldots f_{i_r} , & \hbox{if } \quad i_1=2n+1\ ,  \nonumber \\
                             0 , & \hbox{otherwise} \ .
                           \end{array}
                         \right.
 \end{eqnarray}
It is easy to verify that $\partial_{2n+1}$ is a derivation for the shuffle product, i.e., 
$$\partial_{2n+1} (a\sha b) = \partial_{2n+1}(a) \sha b + a \sha \partial_{2n+1}(b) \ ,$$
for any $a,b\in \Q\langle f_3,f_5,\ldots  \rangle$.
The map $\partial_{2n+1}$  decreases the motivic depth by 1, and the weight by $2n+1$.
If we set $\partial_{2n+1} (f_2)=0$, then the maps $\partial_{2n+1}$ uniquely extend to derivations:
$$\partial_{2n+1} : \U \To \U \ . $$
\begin{defn} Let $\partial_{<N}$ be the  sum of   $\partial_{2i+1}$ for $1<2i+1<N$:
\begin{equation}\label{deltaN} \partial_{<N} : \U_N \To \bigoplus_{1\leq i<\lfloor {N\over 2} \rfloor} \U_{N-2i-1}
\end{equation}
\end{defn}

\begin{lem} The following sequence is  exact: 
\begin{equation} \label{keronedim}   0 \To f_N \Q \To \U_N \overset{\partial_{<N}}{\To} \bigoplus_{1\leq i< \lfloor {N\over 2} \rfloor} \U_{N-2i-1} \To 0
\end{equation}
\end{lem}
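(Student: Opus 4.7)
The plan is to handle the two non-trivial pieces of the exactness separately: first the computation of the kernel, then the surjectivity.

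For the kernel, I would decompose any $x\in \U_N$ uniquely as $x = \sum_{k\geq 0} x_k\, f_2^k$ with $x_k \in \Q\langle f_3,f_5,\ldots\rangle_{N-2k}$. Since each $\partial_{2i+1}$ kills $f_2$ and acts only on the shuffle-algebra factor, the equation $\partial_{<N}(x)=0$ splits into the conditions $\partial_{2i+1}(x_k)=0$ for every $k$ and every $i$ with $3\leq 2i+1 < N$. Because $\partial_{2i+1}$ on a word in the odd generators is just the truncation that extracts the words beginning with $f_{2i+1}$, this forces every word appearing in $x_k$ with nonzero coefficient to start with a letter $f_{2j+1}$ of weight $2j+1\geq N$. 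Since every letter in a word of weight $N-2k$ has weight at most $N-2k$, the only surviving possibilities are: (i) $N-2k=0$, so $k=N/2$ (only when $N$ is even) and $x_{N/2}\in\Q$; or (ii) $k=0$ and $x_0$ is a scalar multiple of the single-letter word $f_N$ (only when $N$ is odd). Combined with the definition $f_N = b_{N/2} f_2^{N/2}$ (and $b_{N/2}\neq 0$) in the even case, this shows $\ker\partial_{<N} = \Q f_N$, and $\partial_{<N}(f_N)=0$ is immediate from the same analysis.

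For surjectivity, the key observation is that left-concatenation by $f_{2i+1}$ provides an explicit section. Given $y \in \U_{N-2i-1}$, write $y = \sum_k y_k f_2^k$ with $y_k\in\Q\langle f_3,f_5,\ldots\rangle_{N-2i-1-2k}$, and set $f_{2i+1}\cdot y := \sum_k (f_{2i+1} y_k) f_2^k \in \U_N$, where $f_{2i+1} y_k$ is the left-concatenation in the shuffle algebra. Then $\partial_{2i+1}(f_{2i+1}\cdot y) = y$ because on each constituent word the first letter is precisely $f_{2i+1}$, while $\partial_{2j+1}(f_{2i+1}\cdot y) = 0$ for $j\neq i$ since the first letter is $f_{2i+1}\neq f_{2j+1}$. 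Hence for an arbitrary tuple $(y_i)$ in the codomain, the element $\sum_i f_{2i+1}\cdot y_i$ is a preimage under $\partial_{<N}$.

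The heart of the argument is the kernel computation; everything else is formal. The only mildly delicate point is the parity distinction between $N$ odd (where $f_N$ is a single generator of the shuffle algebra) and $N$ even (where $f_N$ is a nonzero scalar multiple of $f_2^{N/2}$), since these two cases correspond to the two separate ways a term of weight $N$ can be annihilated by all the truncation maps with index less than $N$. Once one verifies that no other elements survive the truncations, both exactness at $\U_N$ and the one-dimensionality of $\ker \partial_{<N}$ fall out simultaneously.
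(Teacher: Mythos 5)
Your argument is correct and is essentially the paper's own proof spelled out in more detail: both rest on the unique decomposition of an element of $\U_N$ according to its first odd letter (or absence thereof), from which the kernel is the line $f_N\Q$ and surjectivity follows via the left-concatenation section. The paper states this unique decomposition $F = \sum_{1\leq i < \lfloor N/2\rfloor} f_{2i+1} v_{N-2i-1} + c f_N$ in one line and reads off the result; you split the bookkeeping between the $f_2^k$ factors and the shuffle-algebra part and treat kernel and surjectivity separately, but the content is identical.
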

\begin{proof} It is clear that every element $F \in \U_N$ can be uniquely written:
\begin{equation}\label{xiexpand}
F= \sum_{1\leq i < \lfloor  {N\over 2} \rfloor} f_{2i+1} v_{N-2i-1} + c f_N \end{equation}
where $c\in \Q$ and the  $v_{j} \in \U_{j}$. The elements $v_{N-2i-1}$ are equal to $\partial_{2i+1} F$ by definition. Every tuple $(v_{N-2i-1})_{1\leq i < \lfloor {N\over 2} \rfloor} $  
arises in this way. \end{proof}
Thus by repeatedly applying operators $\partial_{2i+1}$ for $2i+1<N$, we can   detect elements in $\U_N$, up to elements in  the kernel $f_N \Q$. 

\subsection{Hopf algebra interpretation}  Recalling that $\U'= \U/f_2$,   consider the set   of indecomposables:
$$L = { \U'_{>0} \over \U'_{>0} \U'_{>0}}\ ,$$
which is the cofree Lie coalgebra on cogenerators $f_3,f_5,\ldots$ in all odd degrees $\geq 3$. 
Its  (weight) graded  dual  $L^\vee$ is the free Lie algebra on dual generators $f^\vee_3,f^\vee_5,\ldots $ in all negative odd degrees $\leq-3$. 
In each graded weight $N$  there is a perfect pairing $L_N\otimes_{\Q}L_N^\vee \rightarrow \Q$ of finite-dimensional vector spaces. 
Thus every dual generator defines a map $f^{\vee}_{2n+1}: L \rightarrow \Q$.
Let 
$\pi: \U'_{>0} \rightarrow L$ denote the  quotient map, and for $2n+1\leq N$ consider the map
\begin{equation}  \label{Hopfdelta}
\U\overset{\Delta'}{\To} \U'_{>0} \otimes_{\Q} \U \overset{\pi \otimes id}{\To} L\otimes_{\Q} \U \overset{f_{2n+1}^{\vee}\otimes id}{\To}  \U
\end{equation}
where $\Delta' = \Delta - 1 \otimes id$.
It follows from the structure of $\U$ that this map is precisely $\partial_{2n+1}$ $(\ref{truncdef})$. 
   Note that    $(\ref{Hopfdelta})$, restricted to $\U_N$,  factors  through:
\begin{equation} \label{factoredDelta} \U_N \To   \U'_{2n+1}\otimes_{\Q} \U_{N-2n-1} \overset{\pi\otimes id}{\To}  L_{2n+1} \otimes_{\Q} U_{N-2n-1}
\end{equation} 
where the first map is the $(2n+1,N-2n-1)$-graded part of $\Delta$. 
 
\subsection{Derivations on $\Ho$}  The previous constructions can be transferred to the Hopf algebra $\Ho$. 
First observe that $\Ho_{\leq N }\subset \Ho$ and $\U_{\leq N} \subset \U$ are subcoalgebras.
 Suppose  that we have a linear bijection  up to  weight $N$:
\begin{equation} \label{weightiso}
\phi: \Ho_{\leq  N}  \overset{\sim}{\To} U_{\leq N}
\end{equation} 
which respects the comodule structures, i.e.,  $\Delta \phi=\phi \Delta$,  and also the multiplication laws, i.e., 
$\phi(x_1 x_2) = \phi(x_1)\phi(x_2)$ for all $x_1,x_2\in \Ho$ such that $\deg x_1+\deg x_2 \leq N$.
Then every element of $\Ho_{\leq N}$, and in particular every motivic multiple zeta value of weight  less than or equal to $N$,  can be identified with a non-commutative polynomial in the generators $f_{i}$.\footnote{We know by \cite{Br23} that such a $\phi$ exists for all $N$.}  

Transporting via the map $\phi$ leads to derivations
$$\partial^{\phi}_{2n+1} =\phi^{-1} \circ  \partial_{2n+1} \circ \phi   $$
for all $2n+1\leq N$.  These define derivations on the whole of $\Ho$, but for the purposes of the present paper we shall only need to consider their restriction 
 $\partial^{\phi}_{2n+1}:\Ho_{\leq N} \rightarrow \Ho_{\leq N-2n-1}$ .
By analogy with $\partial_{<N}$,  we define 
\begin{equation}\label{deltaphilessNdefn}
\partial^{\phi}_{<N} = \bigoplus_{1\leq i < \lfloor { N \over 2} \rfloor}  \partial^{\phi}_{2i+1} \ . 
\end{equation}
   We shall compute the derivations $\partial^{\phi}_{2i+1}$ in the following way.
Let 
$$\pi : \Ao_{>0} \rightarrow \Lo$$
 denote the quotient map, where $\Lo$ is the Lie coalgebra of indecomposables $(\ref{lodef})$.  We denote the map  $ \Lo_{\leq N} \rightarrow L_{\leq N}$ induced by $(\ref{weightiso})$ by $\phi$ also.
\begin{defn} For all $2n+1\leq N$, define the \emph{coefficient map} to be 
 $$c^{\phi}_{2n+1}=f_{2n+1}^\vee\circ \phi:\Lo_{2n+1} \To \Q\ . $$
\end{defn} 
We shall sometimes extend the coefficient map to $\Ao_{2n+1}$ and $\Ho_{2n+1}$, and denote it by $c^{\phi}_{2n+1}$ also. For an element $\xi \in \Ho_{2n+1}$, the number $c^{\phi}_{2n+1}(\xi)$
is simply  the coefficient of $f_{2n+1}$ in the expansion  $(\ref{xiexpand})$  of $\phi(\xi)$ as a non-commutative  polynomial in the $f$'s.
\begin{defn} \label{Drdef}
For each  odd $r\geq 3$, define 
$$D_r: \Ho_{N} \overset{\Delta_{r,N-r}}{\To} \Ao_r \otimes_{\Q} \Ho_{N-r} \overset{\pi\otimes id}{\To} \Lo_{r}\otimes_{\Q} \Ho_{N-r}$$ 
to be the weight $(r,N-r)$-graded part of the coaction, followed by projection onto the Lie coalgebra.
It follows  from theorem  \ref{thmGonchCoproduct}  that the action of $D_r$  on the element $ \Imot(a_0;a_1,\ldots, a_n;a_{n+1})$ is given explicitly   by:
\begin{equation}\label{mainformula}   
 \sum_{p=0}^{n-r} \Imotl(a_{p} ;a_{p+1},\tdots, a_{p+r}; a_{p+r+1}) \otimes   \Imot(a_{0}; a_1, \tdots, a_{p}, a_{p+r+1}, \tdots ,  a_n ;a_{n+1}) \ .
 \nonumber \end{equation}
Note that this formula is closely related to the Connes-Kreimer  coproduct formula  for  a class of linear graphs  with two external legs.
 By analogy,  we call the sequence $(a_p;a_{p+1},\ldots, a_{p+r};a_{p+r+1})$ on the left the \emph{subsequence} and the sequence 
 $(a_{0}; a_1, \tdots, a_{p}, a_{p+r+1}, \tdots ,  a_n ;a_{n+1})$ on the right the \emph{quotient sequence} of our original sequence $(a_0;a_1,\ldots, a_n;a_{n+1})$.
 \end{defn}

It follows from the above that 
\begin{equation}\label{explicitphidelta} \partial^{\phi}_{2n+1} =(c^{\phi}_{2n+1} \otimes id)\circ D_{2n+1}\ .
\end{equation}
Only the coefficient map depends on the choice of $\phi$.

\subsection{Normalization of $\phi$ in depth 1} \label{sectNormalized} In order to put  the operators $\partial^{\phi}_{2n+1}$ to use we first have to choose an  isomorphism $\phi$.
We shall always assume that $\phi$ is normalized so that 
$$\phi(\zetam(2n+1)) = f_{2n+1}$$
for all $2n+1\leq N$.  The coefficient $c^{\phi}_{2n+1} \zetam(2n+1)$  is  therefore 1.
 By the shuffle relations for motivic iterated integrals, one can check that 
 \begin{equation}\label{Idepth1shuff}
 \Imot(0; \underbrace{0,\ldots, 0}_a, 1, \underbrace{0,\ldots, 0}_{2n-a};1)  = (-1)^{a} \binom{2n}{a} \zetam(2n+1)\ .
  \end{equation}
Therefore for any normalized $\phi$  we have
  \begin{equation} \label{normphiproj}
c^{\phi}_{2n+1}  \, (\Imotl(0; \underbrace{0,\ldots, 0}_a, 1, \underbrace{0,\ldots, 0}_{2n-a};1))  = (-1)^{a} \binom{2n}{a}\ .
\end{equation}
In the later examples, this equation will be  used many times.

\begin{ex} \label{ex2343} 
We compute the operators $D_r$ on some examples.
\vspace{0.05in}

\emph{i).} Consider the element  $\zetam(2,3) =\Imot(0;10100;1) \in \Ho_5$. We have
$${\small D_3 \Imot(0;10100;1) =   \Imotl(1;010;0) \otimes \Imot(0;10;1) +\Imotl(0;100;1)\otimes  \Imot(0;10;1) } $$
 The   reflection  relation  yields  $\Imot(1;010;0) = - \Imot(0;010;1)$  which equals $ 2\Imot(0;100;1)$ by $(\ref{Idepth1shuff})$,  so we conclude that
$D_3 \zetam(2,3) = 3\,  \zetal(3) \otimes \zetam(2)$. In particular for any normalized $\phi$, we have
$\partial^{\phi}_3 \zetam(2,3) = 3\, \zetam(2)\ .$ Thus
$\phi(\zetam(2,3))=3 f_3f_2 + cf_5$ where $c\in \Q$ remains to be determined.
\vspace{0.05in}

\quad  \emph{ii).} Consider   $\zetam(4,3)=\Imot(0;1000100;1) \in \Ho_7$. From  $(\ref{mainformula})$ ,
{\small \begin{eqnarray}
D_3 \Imot(0;1000100;1) & =  & \Imotl(0;100;1) \otimes \Imot(0;1000;1)    \nonumber \\
 & = & \zetal(3)
 \otimes \zetam(4) \nonumber \\
D_5 \Imot(0;1000100;1) & = &   \Imotl(1;00010;0)  \otimes  \Imot(0;10;1)+   \Imotl(0;00100;1)\otimes  \Imot(0;10;1)  \nonumber   \\
 & =  &  10\,  \zetal(5) \otimes \zetam(2)    \nonumber 
 \end{eqnarray}}
 \noindent Thus, for a normalized $\phi$,   $\partial^{\phi}_3 \zetam(4,3) = \zetam(4)$ and $\partial^{\phi}_5 \zetam(4,3) =10\,  \zetam(2)$.
 Hence $\phi(\zetam(4,3))= f_3 f_4+ 10 f_5 f_2 + c f_7$, where $c\in \Q$ is to be calculated.
\end{ex}
These examples can be  depicted graphically  as follows. The derivations  above  cut off a segment from the marked semi-circles
indicated below. Only the segments which give non-zero contributions are indicated.
\begin{figure}[h!]
 \begin{center}
    \leavevmode
    \epsfxsize=10.0cm \epsfbox{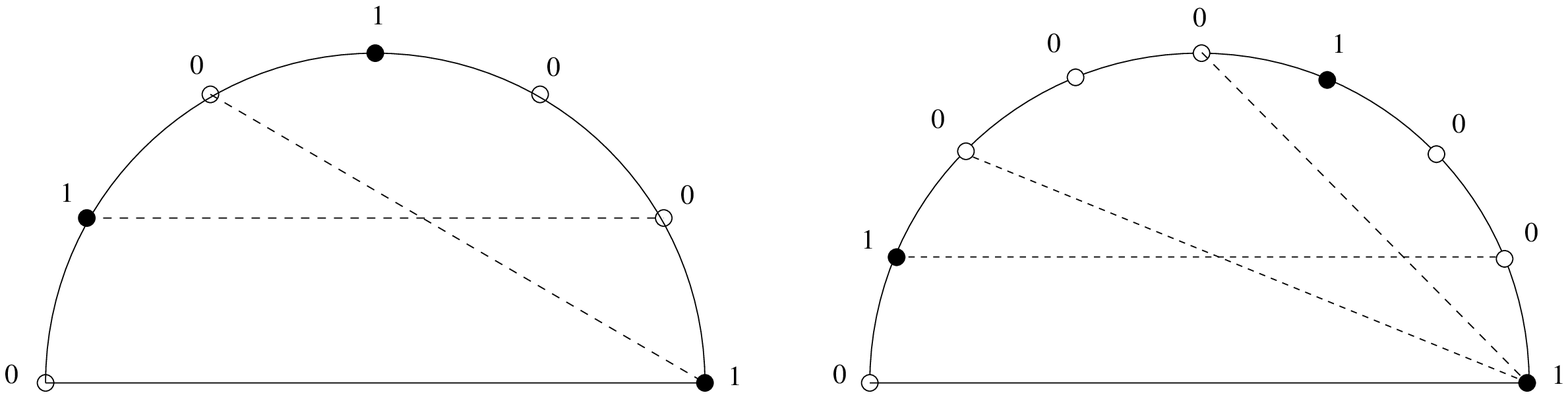}
 \label{2gen}
 \put(-250,-20){$\Imot(0;10100;1)$}  \put(-100,-20){$\Imot(0;1000100;1)$}
  \end{center}
\end{figure}

It follows from $(\ref{keronedim})$ that  the operators $D_{2r+1}$ yield a lot  of explicit information about  multiple zeta values and their motivic versions.

 As a further illustration, consider the family of elements
 $$\zetam(1,3,\ldots, 1,3)=\Imot(0;1100\ldots 1100;1)$$
 Any subsequence of odd length $2r+1$ of  $1100\ldots 1100$ necessarily begins and ends with the same symbol, and so the corresponding motivic
 iterated integral vanishes, by {\bf I0}. It follows that for any $\phi$, $\partial^{\phi}_{2r+1} \zetam(3,1,\ldots, 3,1)=0$ for all $r\geq 1$. 
 Therefore by $(\ref{keronedim})$  the element $\zetam(1,3,\ldots, 1,3)$ is a rational multiple of $\zetam(N)$, where $N$ is its weight. 
 On taking the period map we deduce that
 $$\zeta(\underbrace{1,3,\ldots, 1,3}_n) = \alpha_n \pi^{4n} $$
 for some $\alpha_n\in \Q$. David Broadhurst showed that $\alpha_n = {1 \over (2n+1)(4n+1)!}$.
 
\section{A decomposition algorithm} \label{sectdecompalg} 
By using the comodule structure of  $\U$ and the explicit formula for the operators $D_{2r+1}$, one obtains an `exact-numerical' algorithm for the 
decomposition of multiple zeta values into any  predefined (algebra) basis.

\subsection{Preliminary definitions}

Suppose that we wish to decompose multiple zeta values up to some weight $M\geq 2$. We need the following set-up.
\2skip

 {\bf 1)}.  For  $2\leq N\leq M$ let $V_N$ be the $\Q$-vector space  spanned by symbols:
 \begin{equation} \label{Alzetagen}
 \zetam(n_1,\ldots, n_r) \end{equation}
 where $n_i\geq 1$, $n_r \geq 2$, and $n_1+\ldots + n_r = N$. We call  $N$ the weight.  
 We  also represent  these  elements another way using  a different set of symbols
 \begin{equation} \label{AlImotgen} \Imot(a_0;a_1,\ldots, a_N; a_{N+1})\qquad \hbox{where } a_i\in \{0,1\}\ .\end{equation} 
Any symbol $(\ref{AlImotgen})$ can be reduced to a linear combination of elements of the form $(\ref{Alzetagen})$ using the following relations:
 \begin{description}

\item[R0]   For $n_i\geq 1$, $n_r \geq 2$, and $n_1+\ldots + n_r = N$, we set
$$  \Imot(0;\underbrace{1,0,\ldots, 0}_{n_1},\ldots, \underbrace{1,0,\ldots, 0}_{n_r} ;1)=  (-1)^r\zetam(n_1,\ldots, n_r)\in V_N$$ 
  \item[R1]    $\Imot(a_0;a_1,\ldots,a_N; a_{N+1})=0 \hbox{ if } a_0 =a_{N+1}$  or $a_1=\ldots =a_N$\ .  \2skip
   \item[R2] For $k,  n_1,\ldots, n_r\geq 1$, 
  $$ (-1)^k  \Imot(0;\underbrace{0,\ldots,0}_k, \underbrace{1,0,\ldots, 0}_{n_1},\ldots, \underbrace{1,0,\ldots, 0}_{n_r} ;1)=  $$
 $$  \sum_{i_1+ \ldots +i_r=k} \! \binom{n_1+i_1-1}{i_1}\ldots \binom{n_r+i_r-1}{i_r}  
 \Imot(0;\underbrace{1,0,\ldots, 0}_{n_1+i_1},\ldots, \underbrace{1,0,\ldots, 0}_{n_r+i_r} ;1) $$ \2skip 
  \item[R3] $\Imot(0;a_1,\ldots, a_N;1) = (-1)^n \Imot(1;a_N,\ldots, a_1;0) $ \2skip
  \item[R4] $\Imot(0;a_1,\ldots, a_N;1) =  \Imot(0;1-a_N,\ldots, 1-a_1;1)$ \2skip
\end{description}
To see this, take any element of the form $(\ref{AlImotgen})$ and  use 
 {\bf R1} and {\bf R3} to ensure that $a_0=0$ and $a_{N+1}=1$. Then use {\bf R2} to rewrite it as a linear combination of elements satisfying   $a_1=1$. By {\bf R4} this ensures that $a_N=0$ and finally apply {\bf R2}
 once more to force $a_1=1$.  Conclude using  {\bf R0}.
 
 \begin{rem} Relations  {\bf R0} and  {\bf R4} actually induce an extra relation (known as duality) on the generators $(\ref{Alzetagen})$. One could  take the quotient of $V_N$ modulo this relation if  one chooses, but we shall not do this here.
 \end{rem}
 Finally, for any generator of $V_N$, define its period to be the real number
 \begin{equation} \label{Alper} per( \zetam(n_1,\ldots, n_r)  ) = 
 \zeta(n_1,\ldots, n_r)\in \R\ .\end{equation}
\2skip

{\bf 2)}. For  $2 \leq N\leq M$ define a $\Q$-vector space $\U_N$ with basis elements
\begin{equation} \label{Alfgen}
 f_{2i_1+1} \ldots f_{2i_r+1} f_2^k 
 \end{equation}
where $r,k \geq 0, i_1,\ldots,i_r\geq 1$,  and $2(i_1+\ldots+i_r)+r+2k=N$.  We also need the multiplication rule
$\sha: \U_m\times \U_n \rightarrow \U_{m+n}$ defined by 
$$  f_{2i_1+1} \ldots f_{2i_r+1} f_2^k \,\sha \, f_{2i_{r+1}+1} \ldots f_{2i_{r+s}+1} f_2^\ell $$
$$ \qquad \qquad = \sum_{\sigma \in \Sigma(r,s)}  f_{2i_{\sigma(1)}+1} \ldots f_{2i_{\sigma(r+s)}+1} f_2^{k+\ell} $$
where $  \Sigma(r,s)$ is the set  of $(r,s)$ shuffles, i.e.,  permutations $\sigma$ of $1,\ldots, r+s$ such that $\sigma(1)< \ldots < \sigma(r) $ 
and  $\sigma(r+1)< \ldots < \sigma(r+s). $ 
\2skip 

{\bf 3)}.  Suppose that we have some conjectural polynomial basis of  (motivic) multiple zeta values $B\subset \bigoplus_{2\leq n \leq M} V_n$  up to weight $M$.  We shall assume that $B$ contains the elements
$$B^0=\{ \zetam(2)\} \cup \{\zetam(3),\zetam(5), \ldots, \zetam(2r+1) \}$$
where $r$ is the largest integer such that $2r+1\leq M$. Denote the remaining elements of $B$ by  
$B'=B \backslash B^0,$ and let $B_n$ denote the set of elements of $B$ of weight $n$.
For $2\leq N\leq M$, let $\langle B\rangle_N$ denote  the  $\Q$-vector space spanned by  monomials in elements of the set $B$
which are of total weight $N$, where the weight is additive with respect to multiplication.    Part of the decomposition algorithm is to verify that $B$ is indeed a polynomial basis for the (motivic) multiple zeta values.
 As a first check, one should have 
\begin{equation} \label{Bnormassumption}     
\dim_{\Q} \langle B\rangle_N =d_N  \hbox{ for all } 2\leq N\leq M\ ,
\end{equation}
where  $d_0=1, d_1=0, d_2=1$ and  $d_k= d_{k-2}+d_{k-3}$ for $k\geq 3$.  The integer $d_N$ is the dimension of the vector space $\U_N$.
\2skip

\subsection{Inductive definition of the algorithm}
The algorithm is defined by induction on the weight  and has two parts:

\begin{enumerate}
 \item For all $n\leq N$, we construct a map $$\phi: B_n \rightarrow \U_n\ ,$$ which assigns a  $\Q$-linear combination of monomials  of the form $(\ref{Alfgen})$ to every element of our basis $B$
  of weight at most $N$.  Using the multiplication law $\sha$,  extend this map multiplicatively  to monomials in  the elements of $B$ to give a map
  $$\rho : \langle B\rangle_n \To \U_n$$
  for all $n\leq N$.  We require that $\rho$ be an isomorphism to continue (otherwise, the present  choice  $B$ is not a basis).

\item An algorithm to extend  $\phi$  to the whole of $V_n$:
\begin{equation}\label{AlphinVndef}
\phi: V_n \To \U_n
\end{equation}
 for all $n\leq N$. Thus  there is an \emph{algorithm} to assign  a  $\Q$-linear combination of monomials  of the form $(\ref{Alfgen})$ to every element $(\ref{Alzetagen})$, but note that it does not actually need to be computed explicitly on all elements of $V_n$, only on the basis elements $B_n$.
\end{enumerate}

Once $(1)$ and $(2$)  have been constructed, they give a way to decompose any element $\xi\in V_N$ as a polynomial in our basis: simply compute 
$$\rho^{-1} (\phi(\xi)) \in \langle B\rangle_N\ .$$
\2skip
We now show how to define $(1)$ and $(2)$ by a bootstrapping procedure. 
Suppose that they have been constructed up to and including weight $N$.\footnote{For the intial  case $N=2$, simply set $\phi(\zetam(2))=f_2$.}

From $(2)$, we  have an algorithm to compute   a set of  coefficient functions
\begin{equation} \label{Alcoeffns}
c^{\phi}_{2r+1} : V_{2r+1} \To \Q
\end{equation}
for all $2r+1\leq N$, which to any element $\xi\in V_{2r+1} $ takes the coefficient of the monomial $f_{2r+1}$ in $\phi(\xi)\in \U_{2r+1}$. The induction steps are:
\2skip

{\bf Step 1}.   Define $\phi$ on elements $\xi \in B_{N+1}$ as follows. If $\xi =\zetam(2n+1) $ then   set
$\phi(\xi)= f_{2n+1}$.  Otherwise,   write $\xi$ (or $-\xi$)  in the form 
\begin{equation}\label{Alxiform}
\xi = \Imot(a_0;a_1,\ldots, a_{N+1}; a_{N+2})\end{equation}
 where $a_i \in \{0,1\}$, using relation {\bf R0}. Define for all $3\leq 2r+1 \leq N$, 
\begin{equation}\label{Alxi2r} 
 \xi_{2r+1} = \sum_{p=0}^{N+1-2r} c^{\phi}_{2r+1} \big( \Imot( a_p;a_{p+1},\ldots, a_{p+2r+1}; a_{p+2r+2})\big) \times \end{equation}
 $$\qquad \qquad   \qquad   \qquad  \qquad   \phi( \Imot( a_0;a_{1},\ldots,a_{p},  a_{p+2r+2},\ldots, a_{N+1}; a_{N+2})) $$ 
Then $\xi_{2r+1} \in \U_{2r+1}$ ($\xi_{2r+1}$ is denoted $\partial_{2r+1} \xi$ in the examples in \S6). The right hand  side of the product is computed   using the algorithm 
for $\phi$ in strictly lower weights $(\ref{AlphinVndef})$.  Finally, define
$$\phi(\xi) = \sum_{3\leq 2r+1 \leq N} f_{2r+1} \xi_{2r+1}\ ,$$
where the product on the right is  concatenation. Having computed $\phi$ explicitly on the elements of $B_{N+1}$, compute the map $\rho:\langle B\rangle_{N+1} \rightarrow \U_{N+1}$  by extending $\phi$ by multiplicativity,  and check that it is an isomorphism. If not, then the choice of $B$ is not a basis. In the case when  $B$ contains  linear combinations of terms of the form $( \ref{Alxiform})$, $\phi$ is extended by  linearity and computed in exactly the same way. 
\2skip

{\bf Step 2}. The algorithm to compute $\phi$ on any generator $\xi \in V_{N+1}$ proceeds as follows. As above, write $\xi$ in the form $(\ref{Alxiform})$, and compute
$\xi_{2r+1}$ for $3\leq 2r+1 \leq N$ using the formula  $(\ref{Alxi2r})$.  As before, let
$$u = \sum_{3\leq 2r+1 \leq N} f_{2r+1} \xi_{2r+1}\ .$$
Then $u$ is an element of $\U_{N+1}$, and we can compute $\rho^{-1}( u) \in \langle B\rangle_{N+1}$ as a polynomial in our basis $B$. 
The general theory tells us that
\begin{equation}Ê\label{cncoeff}
c_{\xi} = {per(\xi - \rho^{-1}( u) ) \over \zeta(N+1)} \in \R
\end{equation}
is a rational number. Compute it to as many digits as required in order to identify this rational to a satisfactory degree of certainty. Define
$$
\phi(\xi) = u + c_{\xi} f_{N+1}\ ,
$$
where $f_{2n} = {\zeta(2n)\over \zeta(2)^n} f_2^n$ in the case where $N+1=2n$ is even.
\2skip

Some worked examples  of this algorithm are computed in $\S6$.

\subsection{Comments}
\emph{i).}  In order to  decompose  an element  $\zetam(n_1,\ldots, n_r)$  of weight $N$ into the basis, one must  also decompose  all the sub and quotient sequences of $\Imot(0;\rho(n_1,\ldots, n_r);1)$
as they occur in the definition of $D_{2r+1}$.  Since such sequences have strictly smaller  weight and smaller numbers of $1$'s, the total number of elements to decompose is 
under control.
\vspace{0.05in}

\emph{ii).}  The computation of the coefficients $(\ref{cncoeff})$ requires an efficient numerical method for computing the multiple zeta values.  There are many ways to do this. A simplistic way is to write the path from $0$ to $1$ as the composition of paths  from $0$ to ${1\over 2}$ and then from ${1\over 2}$ to $1$, and use the composition of paths formula. 
The upshot is that every multiple zeta can be written in terms of  multiple polylogarithms evaluated at ${1\over 2}$. Many other methods are  also available. 
\vspace{0.05in}

\emph{iii).}
 This is only an algorithm in the true sense of the word in so far as it is possible to compute the coefficients $c_{\xi}$ $(\ref{cncoeff})$, and this  is the only transcendental input.
 A different realization of the  motivic multiple zeta values  (say in the $p$-adic setting, or otherwise) might lead to an exact algorithm for the computation of these coefficients too.
We hope that one can give  a theoretical upper bound for the prime powers which can occur in the denominators $c_{\xi}$ as a function of the weight (and choice of basis).
\vspace{0.05in}

\emph{iv).}  There is in fact no reason to suppose that our basis is an algebra basis, nor that it contains the depth one elements $\zetam(2n+1)$.
For example,  in \cite{Br23} we proved that the Hoffman elements:
\begin{equation} \label{Hoffbasis} \zetam(n_1,\ldots, n_r)\ ,  \hbox{ where } n_i \in 2, 3
\end{equation} 
are a vector space basis for $\Ho$. It is obvious that the number of such elements in weight $N$ is given by the integers $d_N$ of $(\ref{enumeration})$. 
This choice of basis gives a canonical map
$$\phi: \Ho \overset{\sim}{\To} \U$$
which respects the coactions,  maps $\zetam(2,\ldots, 2)$ ($n$ two's) to  $ {(6f_2)^n \over (2n+1)!}$ for all $n\geq 1$, and  for all $n=a+b+1$ satisfies
\begin{eqnarray}
c^{\phi}_{2n+1} \zetam(\underbrace{2,\ldots, 2}_a, 3, \underbrace{2,\ldots, 2}_b) & = &2 (-1)^{n} \Big( \binom{2n}{2a+2} - (1-2^{-2n}) \binom{2n}{2b+1}\Big)  \nonumber \\
c^{\phi}_{2n+1} \zetam(n_1,\ldots, n_r)& = &0 \hbox{ if at least } 2 \,\, n_i\hbox{'s  are equal to } 3 \nonumber
\end{eqnarray}  
  A slight variant of the previous algorithm allows one to decompose motivic MZV's into this basis also.
 \vspace{0.05in}
 
 \emph{v).}  A similar version of this algorithm also works for multiple polylogarithms evaluated at $N^{\mathrm{th}}$ roots of unity, in particular in the case of Euler sums ($N=2$). In some cases an explicit basis for the motivic iterated integrals at roots of unity is known by \cite{D}.
 \vspace{0.05in}

 \emph{vi).} Given a relation between motivic multiple zeta values, one can define operators $\partial^{\phi}_{2n+1}$ (for some choice of $\phi$), to obtain more relations of lower weight. 
  Applying the period map gives a relation between real MZVs.
 Thus a relation between motivic MZVs gives rise to a family of relations between real MZVs.
 
The converse is also true: the decomposition algorithm allows one to prove  an identity between motivic MZVs if one knows  sufficiently many relations between real MZVs
to determine all the coefficients $(\ref{cncoeff})$ which arise in the algorithm. This was alluded  to in point $(2)$ of the introduction.  In (\cite{Br23}, \S4) this idea was used to lift an identity between real MZV's   to the motivic level (it is in fact the definition of the motivic MZV's).

\section{ Worked example of the decomposition algorithm} \label{sectAppendix}
 We  use the following set  of motivic multiple zeta values as our independent algebra generators up to weight $10$ (compare the tables in \S\ref{sectMZVstructure}): 
 \begin{equation}\label{Appbasis} B=\{\zetam(2), \zetam(3) , \zetam(5), \zetam(7), \zetam(3,5), \zetam(9), \zetam(3,7)\}\ .
 \end{equation}
 We first associate to each element of $B$  an element in $\U$. To economize on notations, we denote $\partial^{\phi_B}_{\cdot}$ by $\partial_{\cdot}$, since 
 there is no confusion. 
 
\subsection{Construction of the basis polynomials} The elements  $\phi^B(b)\in \U$, for  $b\in B$, are defined as follows.   Firstly, 
$$\phi^B(\zetam(n))= f_n, \hbox{ for } n=2, 3,5,7,9\ , $$
by $(2)$ of \S\ref{sectdecompalg}. By direct application of definition \ref{Drdef}  we have:
{ \small \begin{eqnarray}
 D_3  \zetam(3,5)  &=  & \Imotl(0;100;1)\otimes \Imot(0;10000;1) + \Imotl(1;001;0)\otimes \Imot(0;10000;1)  \nonumber  \\
 D_5  \zetam(3,5)  &=  & \Imotl(1;00100;0)\otimes \Imot(0;100;1) + \Imotl(0;10000;1)\otimes \Imot(0;100;1)  \nonumber  
 \end{eqnarray}}
\noindent 
By  $(\ref{Idepth1shuff})$,  $\partial_3 \, \zetam(3,5) = 0$, $\partial_5\,  \zetam(3,5) = -5 \, \zetam(3)$, and therefore
\begin{equation}\label{App35} \phi^B(\zetam(3,5)) = -5 f_5 f_3\ , \end{equation}
following the prescription of $(2)$, \S\ref{sectdecompalg}. Similarly,
{ \small \begin{eqnarray}
 D_3 \zetam(3,7)  &=  & \Imotl(0;100;1)\otimes \Imot(0;1000000;1) + \Imotl(1;001;0)\otimes \Imot(0;1000000;1)  \nonumber  \\
 D_5  \zetam(3,7)  &=  & \Imotl(1;00100;0)\otimes \Imot(0;10000;1)   \nonumber   \\
 D_7  \zetam(3,7)  &=  & \Imotl(1;0010000;0)\otimes \Imot(0;100;1)+\Imotl(0;1000000;1)\otimes \Imot(0;100;1)   \nonumber   
  \end{eqnarray}}
 \noindent Thus  $\partial_3 \, \zetam(3,7)= 0$, $\partial_5 \, \zetam(3,7)=-6 \,\zetam(5)$, $\partial_7 \, \zetam(3,7)= -14 \,\zetam(3)$, \emph{i.e.}, 
  \begin{equation}\label{App37} \phi^B(\zetam(3,7)) = -14 f_7f_3 - 6 f_5f_5\ . \end{equation}
  This computation  proves that $B$ is indeed an algebra basis, since  the elements  in $\phi^B(\langle B\rangle_n)$  for $n\leq 10$
  are linearly independent.
  For  example, in weight 10 one checks that we have the following basis for $\U_{10}$:
   $$ f_2^5\ , \ f_3\sha f_3f_2^2 \  , \ f_3\sha f_5f_2 \ , f_5\sha f_5 \ , \ -5 f_5f_3f_2\ , \  f_3 \sha f_7 \ ,   -14f_7f_3 -6f_5f_5$$
 Therefore any motivic MZV of weight $10$ can be uniquely written
\begin{eqnarray}
\xi = a_0 \zetam(2)^5 +a_1\zetam(2)^2 \zetam(3)^2 + a_2 \zetam(2)\zetam(3) \zetam(5) + a_3 \zetam(5)^2 \nonumber \\ 
 + a_4 \zetam(2)\zetam(3,5) + a_5 \zetam(3)\zetam(7) + a_6 \zetam(3,7)\ , \label{wttenform}
 \end{eqnarray}
where $a_0,\ldots, a_6\in \Q$. From the   action of  $\partial_{3}, \partial_5, \partial_7$ computed in  $(\ref{App35})$, $(\ref{App37})$,  we see that the $a_i$ are  given  by applying the following operators
\begin{equation} \label{Appwt10operators}
a_1 =  {1\over 2}c^2_2\partial_3^2  \ ,\   a_2= c_2\partial_5\partial_3 \ , a_3 = {1\over 2} \partial_5^2 + {6 \over 14} [\partial_7,\partial_3] \end{equation}
$$ a_4 = {1\over 5} c_2 [\partial_3,\partial_5] \ , \ a_5 = \partial_7\partial_3 \ , \ a_6 = {1\over 14} [\partial_7,\partial_3]$$
to the element $\phi^B(\xi)$, where $c_2^n$ means taking the coefficient of $f_2^n$.

\subsection{Sample decompositions} Let us compute $\zetam(4,3,3)$ as a polynomial in our basis $B$.  From the  calculations $(4)$ below, we shall see that its  non-trivial sub and quotient sequences  are 
$\zetam(3,4)$,  $\zetam(4,3)$, $\zetam(2,3)$.
Working backwards, we decompose these elements in increasing order of weight.

\begin{enumerate} 
\item \emph{Decomposition of $\zetam(2,3)$}. By example \ref{ex2343},  $ \partial_3 \zetam(2,3) = 3\, \zetam(2)$.   In weight five, $\U_5\cong \Q  f_3f_2  \oplus \Q f_5$, so  it follows that $\zetam(2,3)$  is of the form $c \,\zetam(5) + 3\, \zetam(3)\zetam(2)$, where $c\in \Q$. By  numerical  computation, or some other method, we check that: 
$$c = {\zeta(2,3) - 3\, \zeta(2)\zeta(3) \over \zeta(5) } \sim -{11\over 2}\ .$$
Thus $\zetam(2,3) =  -{11\over 2} \zetam(5)+3 \zetam(3) \zetam(2).$ 
\vspace{0.05in}

\item \emph{Decomposition of $\zetam(4,3)$}. By example \ref{ex2343},   we have  $\partial_3 \zetam(4,3) =  \zetam(4) = {2\over 5} \zetam(2)^2$, and 
$\partial_5 \zetam(4,3) = 10 \zetam(2)$.  In weight 7, 
$$\U_7 \cong \Q  f_3 f^2_2 \oplus f_5f_2 \oplus \Q f_7$$
so $\phi^B(\zetam(4,3))$ is of the form $c f_7 + 10 f_5 f_2 + {2\over 5} f_3 f^2_2 $. By numerical computation  or otherwise, 
$$c = {\zeta(4,3) - 10\, \zeta(2)\zeta(5) -{2\over 5} \zeta(3) \zeta(2)^2 \over \zeta(7) } \sim -18\ .$$
Thus $\zetam(4,3) = -18\, \zetam(7) + 10 \, \zetam(5) \zetam(2) + {2\over 5} \,  \zetam(3) \zetam(2)^2.$
\vspace{0.05in}
\item  \emph{Decomposition of $\zetam(3,4)$}.  We omit the computation, which is similar, and merely state that
$\zetam(3,4)  = 17 \zetam(7) - 10\,\zetam(5) \zetam(2).$ (It also follows immediately  from $(2)$ and the so-called  stuffle relation $\zetam(3)\zetam(4)=\zetam(3,4)+\zetam(4,3)+\zetam(7)$.)

  \vspace{0.05in}
\item \emph{Decomposition of $\zetam(4,3,3)$}.  By $(\ref{mainformula})$ and lemma \ref{lemImotrelations},
{\small \begin{eqnarray}
D_3 \zetam(4,3,3) & = & (\Imotl(0;100;1) + \Imotl(1;001;0) + \Imotl(0;100;1)) \otimes \Imot(0;1000100;1)  \nonumber \\
  & = & \zetal(3) \otimes \zetam(3,4) \ . \nonumber \\
D_5 \zetam(4,3,3) & = & \Imotl(1;00010;0)\otimes \Imot(0;10100;1)  +\Imotl(0;00100;1)\otimes \Imot(0;10100;1)  \nonumber \\
  & = & 10\, \zetal(5) \otimes \zetam(3,2) \ . \nonumber \\
D_7 \zetam(4,3,3) & = & (\Imotl(1;1000100;0)+ \Imotl(1;0001001;0) + \Imotl(0;0100100;1)) \otimes \Imot(0;100;1)  \nonumber \\
  & = & (\zetal(4,3) -\zetal(3,4) - 3(\zetal(4,3)+\zetal(3,4)   )\otimes \zetam(3) \  ,\nonumber \\
  & = & -32 \, \zetal(7) \otimes \zetam(3)  \nonumber
  \end{eqnarray} }
 Thus we have:
  {\small \begin{eqnarray}
\phi^B(\partial_3  \zetam(4,3,3))  = \phi^B( \zetam(3,4)) & =  &-18 f_7 + 10 f_5f_2  +{2\over 5}f_3f_2^2 \ . \nonumber \\
\phi^B( \partial_5 \zetam(4,3,3))  =  10 \,\phi^B(\zetam(3,2)) &  = & -55 f_5 + 30  f_3 f_2 \ . \nonumber \\
 \phi^B(\partial_7 \zetam(4,3,3))  =   -32\, \phi^B(\zetam(3))&  =  & -32 f_3\ . \nonumber 
  \end{eqnarray} }
 Using the  equations ($\ref{Appwt10operators})$ we conclude that  
$$\zetam(4,3,3) = a_0\,  \zetam(2)^5 + {1\over 5} \zetam(2)^2 \zetam(3)^2 + 10 \, \zetam(2) \zetam(3) \zetam(5) - {49 \over 2} \zetam(5)^2$$
$$ \qquad \qquad - 18 \, \zetam(3)\zetam(7) - 4\,  \zetam(2) \zetam(3,5)  +\zetam(3,7)$$
Finally, by numerical computation, one checks once again that
$$\zeta(4,3,3)- \Big[{1\over 5} \zeta(2)^2\zeta(3)^2 + \ldots + \zeta(3,7) \Big]   \sim {271 \over 10} \zeta(10) =  {4336 \over 1925}\zeta(2)^5$$
\end{enumerate}
which gives the coefficient $a_0$ of $\zetam(2)^5$.  In this example  the coefficients $a_1,a_2,a_4$ of $(\ref{wttenform})$ are computed exactly; 
the others are obtained indirectly via the period map and numerical approximation.

\section{Acknowledgements}
Very many thanks to Pierre Cartier for a thorough reading of the text and many detailed corrections and comments.  
This paper was completed during a stay at the Research Institute for Mathematical Sciences, of  Kyoto University, 
and based on a talk given at the conference: ``Development of Galois -Teichm\"uller Theory and Anabelian Geometry'' held there.
 I would like to   thank the organisers heartily  for their hospitality.

 This work was  supported by  European Research Council grant no.  257638: `Periods in algebraic geometry and physics'. 

\bibliographystyle{plain}
\bibliography{main}

\begin{thebibliography}{99}


\bibitem{Br23} {\bf F. Brown}: {\it Mixed Tate motives over $\Z$}, preprint (2010).

\bibitem{DataMine}  {\bf J. Bl\"umlein, D.J. Broadhurst, J.A.M. Vermaseren}: {\it The Multiple Zeta Value Data Mine}, Comput. Phys. Commun. 181, 582-625, (2010).




\bibitem{D} {\bf P. Deligne}: {\it Le groupe fondamental unipotent motivique de $\G_m - \mu_N$, pour  $N=2,3,4,6$ ou $8$},  Publ. Math. Inst. Hautes \'Etudes Sci. 101 (2010).



\bibitem{DG} {\bf P. Deligne, A. Goncharov}: {\it  Groupes fondamentaux motiviques de Tate mixte}, Ann. Sci. \'Ecole Norm. Sup. (4) 38 no. 1 (2005), 1--56. 




\bibitem{GG} {\bf A. B. Goncharov}: {\it Galois symmetries of fundamental groupoids and noncommutative geometry},  Duke Math. J. Volume 128, Number 2 (2005), 209-284. 


\bibitem{GM} {\bf A. B. Goncharov, Y. I. Manin}, {\it Multiple $\zeta$-motives and moduli spaces $\overline{\mathfrak{M}}_{0,n}$}, Compositio 
Math. 140 (2004), 1-14. 


\bibitem{MP} {\bf H. N. Minh, M. Petitot}, {\it Lyndon words, polylgoarithms and the Riemann $\zeta$-function}, Discrete Maths, vol 217, 273-292 (2000).


\bibitem{T} {\bf T. Terasoma}, {\it Mixed Tate motives and multiple zeta values}, Invent. Math. 149, no. 2, 339-369 (2002). 

\end{thebibliography}

\end{document}